\documentclass[dvips, 12pt, a4paper]{paper}
\usepackage{amsmath,amssymb,amsthm, dsfont}
\usepackage{color}
\definecolor{violet}{rgb}{0.5,0,0.8}

\setlength{\textwidth}{16cm} \setlength{\textheight}{24cm}
\setlength{\oddsidemargin}{1cm} \setlength{\evensidemargin}{1cm}
\setlength{\hoffset}{-1cm } \setlength{\voffset}{-1cm}
\setcounter{MaxMatrixCols}{20}

\setlength{\parindent}{0cm}

\theoremstyle{plain}
\newtheorem{thm}{Theorem}[section]

\theoremstyle{plain}
\newtheorem{cor}[thm]{Corollary}

\theoremstyle{plain}
\newtheorem{lemma}[thm]{Lemma}

\theoremstyle{plain}

\theoremstyle{plain}
\newtheorem{proposition}[thm]{Proposition}

\theoremstyle{plain}

\theoremstyle{plain}

\theoremstyle{plain}
\newtheorem{remark}[thm]{Remark}

\pagestyle{myheadings}

\bibliographystyle{acm}
\begin{document}

\title{Martingale representation\\in progressive enlargement by the reference filtration of a semi-martingale:\\a note on the multidimensional case}
\author{Antonella Calzolari \thanks{Dipartimento di Matematica - Universit\`a di Roma
"Tor Vergata", via della Ricerca Scientifica 1, I 00133 Roma,
Italy }  \and Barbara Torti $^*$} \maketitle
\bigskip
\maketitle

\begin{abstract}
Let $\bold{X}$ and $\bold{Y}$ be an $m$-dimensional $\mathbb{F}$-semi-martingale
and an $n$-dimensional $\mathbb{H}$-semi-martingale respectively
on the same probability space $(\Omega, \mathcal{F}, P)$, both enjoying the strong
predictable representation property.~We propose a martingale
representation result for the square-integrable
$(P,\mathbb{G})$-martingales, where $\mathbb{G}=\mathbb{F}\vee\mathbb{H}$.~As a first
application we identify the biggest possible value of the multiplicity in the sense of Davis and Varaiya of
$\bigvee_{i=1}^d\mathbb{F}^i$, where, 
fixed $i\in(1,\ldots, d)$, $\mathbb{F}^i$ is the reference
filtration of a real martingale $M^i$, which enjoys the $(P,
\mathbb{F}^i)$ predictable representation property.~A second
application falls into the framework of credit risk modeling and
in particular into the study of the progressive enlargement of the
market filtration by a default time.~More precisely, when the
risky asset price is a multidimensional semi-martingale enjoying
the strong
 predictable representation property and the default time satisfies the density hypothesis, we present a new proof of the analogous of the classical Kusuoka's theorem.
\end{abstract}
\begin{keywords}
Semi-martingales, predictable representations property,
enlargement of filtration, completeness of a financial market
\end{keywords}
\textbf{AMS 2010} 60G48, 60G44, 60H05, 60H30, 91G99
\newpage
\section{Introduction}
\noindent On a stochastic basis $(\Omega^\prime,\mathcal{F}^\prime,\mathbb{F}^\prime,P^\prime)$ let $\bold{X^\prime}=(X^{\prime,1},\ldots,X^{\prime,l})$ be an $l$-dimensional $\mathbb{F}^\prime$-semi-martingale,
 which admits at least one equivalent local-martingale measure $Q^\prime$.~Then $\bold{X^\prime}$ enjoys the \textit{$(Q^\prime,\mathbb{F}^\prime)$-(strong)
 predictable representation property ($(Q^\prime,\mathbb{F}^\prime)$-p.r.p.)} when any real $(Q^\prime,\mathbb{F}^\prime)$-local martingale can be written as $m+\boldsymbol{\xi}\bullet \bold{X^\prime}$ where $m$
  is a random variable $\mathcal{F}^\prime_0$-measurable,
 $\boldsymbol{\xi}=(\xi^{1},\ldots,\xi^{l})$ is an $\mathbb{F}^\prime$-predictable process and $\boldsymbol{\xi}\bullet \bold{X^\prime}$   
 is the vector stochastic integral (see \cite{cha-stri94}
 and \cite{shi-che02}).~As well known, this property is equivalent
 to the existence of
a unique, modulo $\mathcal{F}^\prime_0$, equivalent local
martingale measure for $\bold{X^\prime}$ (see Proposition 3.1 in
\cite{ame-be-schw03}).~A particular
 case is when the local martingale measure for $\bold{X^\prime}$ is the unique equivalent martingale measure.~In this case $\mathcal{F}^\prime_0$ is the trivial $\sigma$-algebra (see Theorem 11.2 in \cite{jacod}).
\bigskip\\
In \cite{caltor15}, given a real valued
$\mathbb{F}$-semi-martingale $X$ on a probability space $(\Omega,
\mathcal{F}, P)$ when there exists a unique equivalent martingale
measure $P^X$, we studied the problem of the stability of the
p.r.p.~of $X$ under enlargement of the reference filtration
$\mathbb{F}$.~In particular we assumed the existence of a second
real semi-martingale $Y$ on $(\Omega, \mathcal{F}, P)$ endowed
with a different reference filtration, $\mathbb{H}$, and admitting
itself a unique martingale measure $P^Y$.~We denoted by
$\mathbb{G}$ the filtration obtained by the union of $\mathbb{F}$
and $\mathbb{H}$ and we stated a representation theorem for the
elements of $\mathcal{M}^2(P,\mathbb{G})$, the space of the real
square-integrable $(P,\mathbb{G})$-martingales (see part ii) of
Theorem 4.11 in \cite{caltor15}).~More precisely we assumed the
$(P,\mathbb{G})$-strong orthogonality of the martingale parts $M$
and $N$ of $X$ and $Y$ respectively, and we showed that every
martingale in $\mathcal{M}^2(P,\mathbb{G})$ can be uniquely
represented as sum of an integral with respect to $M$, an integral
with respect to $N$ and an integral with respect to their
quadratic covariation $[M,N]$.~Equivalently we identified $(M, N,
[M,N])$ as a $(P,\mathbb{G})$-\textit{basis of real strongly
orthogonal martingales} (see, e.~g.~\cite{davis2}).~We stress that
$\mathbb{F}$ and $\mathbb{H}$ could be taken larger than the
natural filtration of $X$ and $Y$
respectively.\bigskip\\
In this paper we
deal with the multidimensional version of
 the representation theorem in \cite{caltor15}.~Here $\bold{X}$ is a $(P,\mathbb{F})$ semi-martingale on $(\Omega, \mathcal{F},
 P)$ with values in $\mathbb{R}^m$ and martingale part $\bold{M}$, $\bold{Y}$ is a $(P,\mathbb{H})$ semi-martingale on $(\Omega, \mathcal{F},
 P)$ with values in $\mathbb{R}^n$ and martingale part $\bold{N}$, the initial $\sigma$-algebras $\mathcal{F}_0$ and
$\mathcal{H}_0$ are trivial, $\bold{X}$ enjoys the
$(P,\mathbb{F})$-p.r.p.~and $\bold{Y}$ enjoys the
$(P,\mathbb{H})$-p.r.p.~Finally, for all $i=1,\ldots,m,\;
j=1,\ldots,n$,  we assume the $(P,\mathbb{G})$-strong
orthogonality of the $i$-component of $\bold{M}$, $M^i$, and of
the $j$-component of $\bold{N}$, $N^j$.\bigskip\\Our main result
is that $\mathcal{M}^2(P,\mathbb{G})$
coincides with \textit{the direct sum of three  
stable spaces of square-integrable
martingales}: the stable space generated by $\bold{M}$, the stable space
generated by $\bold{N}$ and the stable space generated by the family of
processes $([M^i,N^j],\; i=1,\ldots,m,\;
j=1,\ldots,n)$ (see \cite{jacod} for the theory of stable
spaces generated by families of martingales and in particular page 114 for their definition and
 Theorem 4.60 at page 143 for their identification as space of vector integrals).~More precisely
any martingale in $\mathcal{M}^2(P,\mathbb{G})$ can be uniquely represented as sum of elements of those stable spaces and
  every pair of elements of any two of those spaces is a pair of real $(P,\mathbb{G})$-strongly orthogonal martingales.~In analogy with the unidimensional case, we can express
the result saying that the triplet of vector processes given by
$\bold{M}$, $\bold{N}$ and any vector martingale obtained by
ordering the family $([M^i,N^j],\; i=1,\ldots,m, j=1,\ldots, n)$
is a \textit{$(P,\mathbb{G})$-basis of multidimensional
martingales}.~\bigskip\\
\noindent Let us present the basic idea and the tools which we use here.\bigskip\\
In order to get the heuristic of the result, but just for this, it
helps to start reasoning with the particular case when $\bold{X}$
and $\bold{Y}$ coincide with their martingale parts $\bold{M}$ and
$\bold{N}$, respectively, and both $\bold{M}$ and $\bold{N}$  have
strongly orthogonal components.~In fact last assumption implies
that under $P$ all vector stochastic integrals with respect to
$\bold{M}$ are componentwise stochastic integrals, that is
$$\boldsymbol{\xi}\bullet \bold{M}=\sum_{i=1}^m \int_0^\cdot \xi^i_t\, dM^{i}_t$$
(see \cite{cha-stri94}
 and Theorem 1.17 in \cite{shi-che02}).~Obviously the same holds for $\bold{N}$.~So that applying
Ito's Lemma to the the product of a $(P,\mathbb{F})$-martingale with a $(P,\mathbb{H})$-martingale and, taking into account the p.r.p.~of
$\bold{M}$ and $\bold{N}$, we realize that for representing all $(P,\mathbb{G})$-martingales we need the family of processes $([M^i,N^j]\; i=1,\ldots,m,\; j=1,\ldots,n)$ in addition to $\bold{M}$ and $\bold{N}$ .\bigskip\\
The proof of our main result when $\bold{X}\equiv \bold{M}$ and
$\bold{Y}\equiv \bold{N}$, not necessarily with pairwise strongly
orthogonal components, is based on two statements of the theory of
stable spaces generated by multidimensional square integrable
martingales, or equivalently of the theory of vector stochastic
integrals with respect to square-integrable martingales.~The two
statements can be roughly resumed as follows.~Given two square
integrable martingales with mutually pairwise strongly orthogonal
components, the stable space generated by one of them is contained
in the subspace orthogonal to the other.~If a multidimensional
 martingale can be decomposed in packets of components mutually pairwise strongly orthogonal, then the stable space generated by this martingale is the direct sum of the stable spaces
 generated by the "packets martingales".~(see Lemma \ref{lemma_orth stable spaces} and Remark (\ref{rem_orth stable spaces})).\bigskip\\
In our framework the assumption that, for all $i=1,\ldots,m$ and $j=1,\ldots,n$, $M^i$ and $N^j$ are
  $(P,\mathbb{G})$-strongly orthogonal martingales
together with the $(P,\mathbb{F})$-p.r.p.~of $\bold{M}$ and the
$(P,\mathbb{H})$-p.r.p.~of $\bold{N}$, allows to show the
$P$-independence of $\mathbb{F}$ and $\mathbb{H}$.~Using this fact
we can prove that, for all $i=1,\ldots,m$ and $j=1,\ldots,n$,
$[M^i,N^j]$ is $(P,\mathbb{G})$-strongly orthogonal to $M^h$,
$h=1,\ldots,m$, and to $N^k$, $k=1,\ldots,n$.~Then the first
general statement above implies that the stable space generated by
$([M^i,N^j]\; i=1,\ldots,m,\; j=1,\ldots,n)$ is contained in the
orthogonal of the subspace generated by $(\bold{M},\bold{N})$,
that is any element of the stable subspace generated by
$([M^i,N^j]\; i=1,\ldots,m,\; j=1,\ldots,n)$ is a real martingale
$(P,\mathbb{G})$-strongly orthogonal to any element of the stable
space generated by $(\bold{M},\bold{N})$.~At the same time using
the $(P,\mathbb{G})$-strong orthogonality of
 $M^i$ and $N^j$, for all $i=1,\ldots,m$ and $j=1,\ldots,n$, we are able to show that $P$ is the unique equivalent martingale measure On $(\Omega, \mathcal{G}_T)$
 for the vector processes $\bold{M}$,
$\bold{N}$ and the real valued processes
$[M^i,N^j]$,\;$i=1,\ldots,m,\;j=1,\ldots,n$.~Then the result
follows by the second general statement above.
  \bigskip\\
   \noindent The well-known formula
   \begin{equation}\label{quadratic_covariation}
[M^i,N^j]_t=\langle M^{c,i},N^{c,j}\rangle_t+\sum_{s\le t}\Delta M^i_s\Delta
N^j_s,
\end{equation}
where $M^{c,i}$ and $N^{c,j}$ are the continuous parts of $M^i$ and $N^j$ respectively, has two immediate consequences.~When $\bold{M}$ and $\bold{N}$ have continuous trajectories, thanks
 to the $P$-independence of $\mathbb{F}$ and $\mathbb{H}$,
 $\bold{M}$ and $\bold{N}$ are enough to represent every $(P,\mathbb{G})$-martingale and the same happens when $\bold{M}$ and $\bold{N}$ admit only totally
 unaccessible jump times.~Therefore, in particular if $\bold{M}$ and $\bold{N}$ are quasi-left continuous martingales then
   $\bold{M}$ and $\bold{N}$ are a $(P,\mathbb{G})$-basis of multidimensional martingales for $\mathcal{M}^2(P,\mathbb{G})$.~Instead, when $\bold{M}$ and $\bold{N}$
    jump simultaneously at accessible jump times, the stable space generated by the covariation
terms has to be added in order to get the representation of $\mathcal{M}^2(P,\mathbb{G})$.\bigskip\\
When $\bold{X}$ and $\bold{Y}$ are not trivial semi-martingales,
that is when they do not coincide with $\bold{M}$ and $\bold{N}$
respectively, under suitable assumptions, we are able to prove
that the representation is the same.~We stress that the approach
is slightly different from that used to handle the unidimensional
case (see Section 4.2 in \cite{caltor15}) and also that the
hypotheses are simpler than those required in that paper.~Here the
question reduces to ask for conditions under which the
$(P^X,\mathbb{F})$-p.r.p.~of $\bold{X}$ and the
$(P^Y,\mathbb{H})$-p.r.p.~of $\bold{Y}$ are equivalent to the
$(P,\mathbb{F})$-p.r.p.~of $\bold{M}$ and the
$(P,\mathbb{H})$-p.r.p.~of $\bold{N}$ respectively, that is to
look for conditions under which the \textit{invariance of the
p.r.p.~under equivalent changes of probability measure} holds
(see, e.g.~Lemma 2.5 in \cite{jean-song15}).~Clearly the involved
changes of measure are two: one of them makes $\bold{X}$ an
$\mathbb{F}$-martingale, the other one makes $\bold{Y}$ an
$\mathbb{H}$-martingale.~Our key assumption is the local
square-integrability of the corresponding Girsanov's
derivatives.~This assumption
 provides in particular the \textit{structure condition} for $\bold{X}$ and $\bold{Y}$.~It is also to note that in our previous paper
  we assumed a bound on the jumps size of $\bold{M}$ and $\bold{N}$, which now follows as a consequence (see \textbf{H4)} in \cite{caltor15}).\bigskip\\
\noindent Moreover here, like in \cite{caltor15}, we obtain also a
second representation result: $\bold{X}$, $\bold{Y}$ and any
ordering of the family $([X^i,Y^j],\; i=1,\ldots,m, j=1,\ldots,
n)$ form a basis of multidimensional martingales for the
$\mathbb{G}$-square-integrable martingales under a new probability
measure on $\mathcal{G}_T$, $Q$, equivalent to
$P|_{\mathcal{G}_T}$.~Indeed, since $\bold{M}$ and $\bold{N}$
enjoy $(P,\mathbb{F})$-p.r.p.~and
$(P,\mathbb{H})$-p.r.p.~respectively, the assumed
$(P,\mathbb{G})$-strong orthogonality of $M^i$ and $N^j$, for all
$i=1,\ldots,m$ and $j=1,\ldots,n$, implies the $P$-independence of
$\mathbb{F}$ and $\mathbb{H}$.~This allows us to construct a
\textit{decoupling probability measure} for  $\mathbb{F}$ and
$\mathbb{H}$ on ${\mathcal{G}_T}$, $Q$, equivalent to $P$ and such
that $\bold{X}$ enjoys the $(Q,\mathbb{F})$-p.r.p.~and $\bold{Y}$
enjoys the $(Q,\mathbb{H})$-p.r.p..~Obviously, the
$Q$-independence of $\mathbb{F}$ and $\mathbb{H}$ implies
$(Q,\mathbb{G})$-strong orthogonality of $X^i-X^i_0$ and
$Y^j-Y^j_0$, for all $i=1,\ldots,m, j=1,\ldots, n)$ and this
allows to obtain the announced representation.
\bigskip\\
\noindent The first application answers to the following
question.~Let a filtration $\mathbb{G}$ be obtained by the union
of a finite family of filtrations,
$(\mathbb{F}^1,\ldots,\mathbb{F}^d)$, such that
$\mathbb{F}^i,\;i=1,\ldots, d,$ is the reference filtration of a
real martingale $M^i$.~Let us assume that $M^i$ enjoys the
$(P,\mathbb{F}^i)$-p.r.p..~Can we determine a
$(P,\mathbb{G})$-basis of real martingales?~Here we give
conditions that make independent the filtrations
$\mathbb{F}^1,\ldots,\mathbb{F}^d$ and as a consequence allow to
 identify a $(P,\mathbb{G})$-basis.~We also underline the link with the notion of multiplicity of a filtration (see \cite{davis1}).\bigskip\\
\noindent The second application falls into the framework of
mathematical finance and more precisely in the reduced form
approach of credit risk modeling.~In \cite{ca-jean-za13}, under
completeness of the reference market and \textit{density
hypothesis} for the default time, the authors got the martingale
representation on the full market under the historical measure
(see also \cite{jean-lecam09} for a similar result).~Our theorem
provides a new proof of that result under slightly different
hypotheses.~Indeed we allow the risky asset price to be a
multidimensional semi-martingale, $\mathbf{S}$ and we assume
 the \textit{immersion property} under the historical measure $P$ of the market
 filtration $\mathbb{F}$ into the filtration $\mathbb{G}$ defined at time $t$ by
 $$\mathcal{G}_t:=\cap_{s> t}\mathcal{F}_s\vee\sigma(\tau\wedge s).$$
One of the key points of our result is, like in
\cite{ca-jean-za13}, the existence of a decoupling measure.~This
measure preserves either the law of $S$  and in particular of its
martingale part $M$, or the law of the \textit{compensated default
process} $H$ defined at time $t$ by
$$H_t:=\mathbb{I}_{\{\tau\leq t\}}-\int_0^{\tau\wedge t}\frac{dF_u}{1-F_u},$$
where $F$ denotes the continuous distribution function of $\tau$.~The last fact
joint with some technical conditions implies that $M$ and $H$  enjoy the
p.r.p.~under the decoupling measure as well under $P$.~From our theorem we immediately derive that
$M$ and $H$ are a basis of multidimensional martingales for the filtration $\mathbb{G}$
under the decoupling measure.~In fact by the density hypothesis the default time does
not coincide with any jump time of
the asset price with positive probability.~Then the invariance property
of the p.r.p.~under equivalent changes of measure allows to establish the representation.\bigskip\\
This
note is organized as follows.~In
\mbox{Section~\ref{sec:sett-not}}, we introduce the notations
and
 some basic definitions, we state the hypotheses, we discuss their consequences and we derive a fundamental ingredient for our result, that is
  the p.r.p.~under $P$ for $\bold{M}$ and $\bold{N}$ with respect to $\mathbb{F}$ and $\mathbb{H}$ respectively.~\mbox{Section~\ref{sec:adding-semimg}}
 is devoted to the main result.~\mbox{Section~\ref{sec:applications}} contains the
 applications.
\section{Setting and hypotheses}\label{sec:sett-not}
Let us fix some notations used in all the
paper.\vspace{0.5em}\\Let $T$ be a finite horizon.~Let
$\bold{S}=(\bold{S}_t)_{t\in[0,T]}=\left((S^1_t,...,S^l_t)\right)_{t\in[0,T]}$
be a c\`adl\`ag square-integrable $l$-dimensional semi-martingale
on a filtered probability space, $(\Omega, \mathcal{A},
\mathbb{A}, R)$, with $\mathbb{A}=(\mathcal{A}_{t\in [0,T]})$
under usual conditions.~We will denote by
$\mathbb{P}(\bold{S},\mathbb{A})$ the set of martingale measures
for $\bold{S}$
on $(\Omega, \mathcal{A}_T)$ equivalent to $R|_{\mathcal{A}_T}$.\bigskip\\
Let $Q\in\mathbb{P}(\bold{S},\mathbb{A})$.~We will denote by
$\mathcal{L}^2(\bold{S}, Q,\mathbb{A})$ the set of the
$\mathbb{A}$-predictable $l$-dimensional processes
$\boldsymbol{\xi}=\left(\boldsymbol{\xi}_t\right)_{t\in[0,T]}=\left((\xi^1_t,...,\xi^l_t)\right)_{t\in[0,T]}$ such that

$$E^Q\left[\int_0^T\boldsymbol{\xi}^{tr}_t\,C^{\boldsymbol{S}}_t\,\boldsymbol{\xi}_t\,dB^{\boldsymbol{S}}_t\right]<+\infty,$$
where
\begin{align}\label{def-B and C}B^{\bold{S}}_t:=\sum_{i=1}^l \langle S^i\rangle_t^{Q,\mathbb{A}},\;\;\;\;\;\;c^{\bold{S}}_{ij}(t):=\frac{d\langle S^i,S^j\rangle_t^{Q,\mathbb{A}}}{dB^{\bold{S}}_t},\;\;i,j\in (1,\ldots,l)\end{align}
with $[S^i,S^j]$  and $\langle S^i,S^j\rangle^{Q,\mathbb{A}}$,
$i,j\in (1,\ldots,l)$ the quadratic covariation process and the
sharp covariation process of
$S^i$ and $S^j$ respectively.\bigskip\\
Following \cite{shi-che02} we will endow $\mathcal{L}^2(\bold{S},
Q,\mathbb{A})$ with the norm
\begin{equation}\label{def-norm}\|\boldsymbol{\xi}\|^2_{\mathcal{L}^2(\bold{S}, Q, \mathbb{A})}:=E^Q\left[\int_0^T\,\boldsymbol{\xi}^{tr}_t\,C^{\bold{S}}_t\,\boldsymbol{\xi}_t\,dB^{\bold{S}}_t\right].\end{equation}
It is possible to prove that
\begin{equation}\label{def-norm2}\|\boldsymbol{\xi}\|^2_{\mathcal{L}^2(\bold{S}, Q, \mathbb{A})}= E^Q\left[[\boldsymbol{\xi}\bullet \bold{S}]_T\right ]\end{equation}
(see formula (3.5) in
\cite{shi-che02}) where $\boldsymbol{\xi}\bullet \bold{S}$ denotes the vector stochastic integral of $\boldsymbol{\xi}\in\mathcal{L}^2(\bold{S}, Q,\mathbb{A})$ with respect to $\bold{S}$.\\
We recall that $\boldsymbol{\xi}\bullet \bold{S}$ is a
one-dimensional process and therefore different from the vector
$(\int_0^\cdot\xi^{1}_t\,dS^{1}_t,\ldots ,
\int_0^\cdot\xi^{l}_t\,dS^{l}_t)$.~Moreover
$\boldsymbol{\xi}\bullet \bold{S}$ coincides with $\sum_{i=1}^l
\int_0^\cdot\xi^{i}_t\,dS^{i}_t$, when $\bold{S}$ has pairwise
$(Q,\mathbb{A})$-strongly orthogonal components (see
\cite{cha-stri94} and \cite{shi-che02}). As noted by Chernyi and
Shiryaev, unless the construction of the vector stochastic
integral is a bit complicated, this notion provides the closeness
of the space
of stochastic integrals.~The notion of componentwise stochastic integral in general does not (for a detailed discussion see \cite{shi-che02}). \bigskip\\
~We will set
$$K^2(\Omega,\mathbb{A},Q,\bold{S}):=\left\{(\boldsymbol{\xi}\bullet \bold{S})_T,\, \boldsymbol{\xi}\in\mathcal{L}^2(\bold{S},Q,\mathbb{A})\right\}.$$
We recall that, when $\mathcal{A}_0$ is trivial,
$\mathbb{P}(\bold{S},\mathbb{A})$ is a singleton, more precisely
$$\mathbb{P}(\bold{S},\mathbb{A})=\{P^\bold{S}\},$$ if and only if $\bold{S}$ enjoys the
$(P^{\bold{S}},\mathbb{A})$-p.r.p.~that is if and only if each $H$ in
$L^2(\Omega,
 \mathcal{A}_T, P^{\bold{S}})$ can be represented $P^{\bold{S}}$-a.s.~, up to an additive constant, as vector stochastic integral with
 respect to $\bold{S}$ that is
\begin{align*}
H=H_0+(\boldsymbol{\xi}^H\bullet \bold{S})_T,
\end{align*}
with $H_0$ a constant  and $\boldsymbol{\xi}^H\in \mathcal{L}^2(\bold{S},
P^{\bold{S}},\mathbb{A})$.~More briefly, $\mathbb{P}(\bold{S},\mathbb{A})=\{P^{\bold{S}}\}$ if and only if
\begin{equation}\label{natural-completeness-2}
L_0^2(\Omega,
 \mathcal{A}_T, P^{\bold{S}})=K^2(\Omega,\mathbb{A},P^{\bold{S}},\bold{S}),\end{equation}
where $L_0^2(\Omega,
 \mathcal{A}_T, P^{\bold{S}})$ is the set of all real centered $P^{\bold{S}}$-square integrable
$\mathcal{A}_T$-measurable random variables
(see \cite{ha-pli2}).~\bigskip\\
We will
indicate by $\mathcal{M}^2(R,\mathbb{A})$ the set of all real square integrable
$(R,\mathbb{A})$-martingales on $[0,T]$, which is a Banach space with the norm
 \begin{equation}\label{def-norm3}\|S\|^2_{\mathcal{M}^2(R,\mathbb{A})}=E^R[S^2_T]\end{equation}
 (see \cite{prott})\footnote{by Jensen inequality   $S^2$ is as a sub-martingale so that $sup_{t\leq T}E[S^2_t]\leq E[S^2_T]< +\infty$ and by Doob's
 inequality $E[sup_{t\leq T} S^2_t]< +\infty$.~An equivalent norm on $\mathcal{M}^2(R,\mathbb{A})$ is  $$E[sup_{t\leq T} S^2_t]$$ (see pages 26, 27 in \cite{jacod})}.\bigskip\\
Finally following \cite{jacod} we will denote by $\mathcal{Z}^2(\boldsymbol{\mu})$
the stable space generated by a finite set of square-integrable martingales
$\boldsymbol{\mu}\subset \mathcal{M}^2(R,\mathbb{A})$.~The general element of $\mathcal{Z}^2(\boldsymbol{\mu})$ is a vector stochastic
integral with respect to $\boldsymbol{\mu}$ (see Theorem 4.60 page 143 in \cite{jacod}).\bigskip\\
Now let us introduce the general setup of our
result.\vspace{0.5em}\\Given a probability space $(\Omega,
\mathcal{F},P)$, a finite time horizon $T\in (0,+\infty)$ and two filtrations
$\mathbb{F}$ and $\mathbb{H}$ under standard conditions and with
$\mathcal{F}_T\subset \mathcal{F}$ and $\mathcal{H}_T\subset
\mathcal{F}$, we consider two square-integrable semi-martingales
and more precisely an $m$-dimensional
 $(P,\mathbb{F})$-semi-martingale $X$ and an $n$-dimensional $(P,\mathbb{H})$-semi-martingale
$Y$ with canonical decomposition
 \begin{equation}\label{semi-martingale}
 \bold{X}=\bold{X}_0+\bold{M}+\bold{A},\;\;\;\;\;\; \bold{Y}=\bold{Y}_0+\bold{N}+\bold{D}
 \end{equation}
such that
 \begin{equation}\label{eq-int-cond-X}
 E^P\left[\|\bold{X}_0\|^2_{\mathbb{R}^m}+\sum^m_{i,j=1}[M^i,M^j]_T+\sum^m_{i=1}|A^i|^2_T\right]<+\infty
 \end{equation}
and
 \begin{equation}\label{eq-int-cond-Y}
 E^P\left[\|\bold{Y}_0\|^2_{\mathbb{R}^n}+\sum^n_{i,j=1}[N^i,N^j]_T+\sum^n_{i=1}|D^i|^2_T\right]<+\infty.
 \end{equation}
Here $\bold{M}=(M^1,...,M^m)$ is an $m$-dimensional
$(P,\mathbb{F})$-martingale with $M^i\in
\mathcal{M}^2(P,\mathbb{F})$ for all $i=1,\ldots,m$,
$\bold{A}=(A^1,...,A^m)$ is an $m$-dimensional
$\mathbb{F}$-predictable process of finite variation,
$\bold{M}_0=\bold{A}_0=0$ and $|A^i|$ denotes the total variation
process of $A^i$.~Note that by the integrability condition
(\ref{eq-int-cond-X}) it follows
\begin{equation}\label{regolarità X}
E^P\Big[\sup_{t\in[0,T]}\|\bold{X}_t\|^2_{\mathbb{R}^m}\Big]<+\infty.
\end{equation}
Analogous considerations hold for $\bold{Y}$.\bigskip\\
We assume that the sets  $\mathbb{P}(\bold{X},\mathbb{F})$ and $\mathbb{P}(\bold{Y},\mathbb{H})$ are singletons and more precisely\vspace{0.5em}\\
\textbf{A1)}\;\textit{$\mathbb{P}(\bold{X},\mathbb{F})=\{P^{\bold{X}}\},$}
\;\textit{$\mathbb{P}(\bold{Y},\mathbb{H})=\{P^{\bold{Y}}\}$}.\vspace{1em}\\
We introduce the Radon-Nikodym derivatives
$$L^{\bold{X}}_t:=\frac{dP^{\bold{X}}}{dP|_{\mathcal{F}_t}},\;\;\;\;L^{\bold{Y}}_t:=\frac{dP^{\bold{Y}}}{dP|_{\mathcal{H}_t}}$$
and their inverses
$$\widetilde{L}_t^{\bold{X}}:=\frac
1{L^{\bold{X}}_t}=\frac{dP|_{\mathcal{F}_t}}{dP^{\bold{X}}},\;\;\;
\;\widetilde{L}^{\bold{Y}}_t:=\frac{1}{L^{\bold{Y}}_t}=\frac{dP|_{\mathcal{H}_t}}{dP^{\bold{Y}}}.$$
Then we require the following regularity conditions on them\vspace{0.5em}\\
\textbf{A2)}\;\begin{equation}\label{ip:regularity
densities}L^{\bold{X}}_T\in
L^2_{loc}(\Omega,\mathcal{F}_T,P),\;\;\;L^{\bold{Y}}_T\in
L^2_{loc}(\Omega,\mathcal{H}_T,P).\end{equation}
\vspace{1em}\\
Let us now discuss the consequences of our assumptions.\bigskip\\
Hypothesis \textbf{A1)} implies that $\mathcal{F}_0$ and $\mathcal{H}_0$ are trivial and that $\bold{X}$ enjoys the $(P^{\bold{X}}, \mathbb{F})$-p.r.p.~and
 $\bold{Y}$ enjoys the $(P^{\bold{Y}}, \mathbb{H})$-p.r.p.~(see \cite{ha-pli2}).\bigskip\\
Using Hypothesis \textbf{A2)} we derive the structure condition for $\bold{X}$ and $\bold{Y}$  and in particular the existence of an
$\mathbb{F}$-predictable $m$-dimensional process
$\boldsymbol{\alpha}=(\alpha^1_t,\ldots,\alpha^m_t)_{t\in [0,T]}$ and an $\mathbb{H}$-predictable process
$\boldsymbol{\delta}=(\delta^1_t,\ldots,\delta^n_t)_{t\in [0,T]}$
 such that for all $i\in (1,\ldots, m)$ and $j\in (1,\ldots, n)$
\begin{equation}\label{struct-cond}A^i_t=\int_0^t\alpha_s^i\,d\langle M^i\rangle^{P,\mathbb{F}}_s,\;\;\;\;\;D^j_t=\int_0^t\delta^j_s\,d\langle N^j \rangle^{P,\mathbb{H}}_s
\end{equation}
(see e.g. Definition 1.1 and Theorem 2.2. in \cite{cho-stri96}).\bigskip\\Moreover it holds
$$\alpha^i\in L^2_{loc}(P\times d\langle M^i\rangle^{P,\mathbb{F}}),\;\;\delta^j\in L^2_{loc}(P\times d\langle N^j\rangle^{P,\mathbb{H}})$$ that is,
following the notations in \cite {jacod}, $\alpha^i\in L^2_{loc}(M^i)$ and $\delta^j\in L^2_{loc}(N^j)$
(see Proposition 4 in \cite{schweizer92} or Theorem 1 in \cite{schw95}).\bigskip\\
Moreover the square-integrability of $\bold{X}$ and $\bold{Y}$ together with assumption \textbf{A2)} imply that for all $i=1,\ldots, m$ and for all $j=1,\ldots, n$, for all $t\in [0,T]$
\begin{equation}\label{ip:regularity inverse densities} \langle \widetilde{L}^{\bold{X}}_t, X^i_t\rangle^{P^{\bold{X}},\mathbb{F}} ,\;\;\;
 \langle \widetilde{L}^{\bold{Y}}_t, Y^j_t
\rangle^{P^{\bold{Y}},\mathbb{H}}\end{equation} exist (see
e.g.~VII 39 in \cite{del-me-b}). \footnote{from (\ref{regolarità
X}) it follows that
$\sup_{t\in[0,T]}\|\bold{X}_t\|_{\mathbb{R}^m}<+\infty$,
$P$-a.s.~and $P^X$-a.s..Therefore, for any divergent sequence
$\{c_n\}_{n\in \mathbb{N}}$ of real number, if
$$T_n:=\inf\left\{{t\in[0,T]}: \sup_{s\leq t}\|\bold{X}_s\|_{\mathbb{R}^m}> c_n\right\},$$ then
$\{T_n\}_{n\in \mathbb{N}}$ is a divergent sequence of stopping
times such that $\sup_{t\in[0,T]}\|\bold{X}_{T_n\wedge
t}\|_{\mathbb{R}^m}< c_n$, that is $\bold{X}$ is locally bounded}
\vspace{0.5em}\\
Finally \textbf{A1)} and \textbf{A2)} allow to transfer the
p.r.p.~from $\bold{X}$ to its martingale part,  $\bold{M}$, and
from $\bold{Y}$ to its martingale part,  $\bold{N}$.~Indeed we can
announce the following result.
\begin{proposition}\label{prop-pred-mart-rep}
Let \textbf{A1)} and \textbf{A2)} be verified.~Then $\bold{M}$ enjoys
the $(P,\mathbb{F})$-p.r.p..
\end{proposition}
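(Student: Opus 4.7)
The plan is to transfer the $(P^{\bold{X}},\mathbb{F})$-predictable representation property of $\bold{X}$ to the $P$-martingale part $\bold{M}$ by the equivalent change of measure from $P^{\bold{X}}$ to $P$, exploiting the local $L^{2}$-regularity of the density $L^{\bold{X}}$ ensured by~\textbf{A2)}. As a first step I would invoke \textbf{A1)} together with the equivalence ``uniqueness of the equivalent martingale measure $\iff$ p.r.p.'' recalled in the Introduction via Proposition~3.1 of \cite{ame-be-schw03}: since $\mathbb{P}(\bold{X},\mathbb{F})=\{P^{\bold{X}}\}$, the semi-martingale $\bold{X}$ enjoys the $(P^{\bold{X}},\mathbb{F})$-p.r.p., so that every $(P^{\bold{X}},\mathbb{F})$-local martingale is a vector stochastic integral with respect to $\bold{X}$.

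Next, given $N\in\mathcal{M}^2(P,\mathbb{F})$ with $N_0=0$ (which one may assume because $\mathcal{F}_0$ is trivial by~\textbf{A1)}), Bayes's formula makes $\widehat N_t:=\widetilde{L}^{\bold{X}}_t\,N_t$ a $(P^{\bold{X}},\mathbb{F})$-local martingale null at zero. A localization argument combined with~\textbf{A2)} shows that $\widehat N$ is locally square-integrable under $P^{\bold{X}}$, so the first step provides an $\mathbb{F}$-predictable $\boldsymbol{\gamma}$ with $\widehat N=\boldsymbol{\gamma}\bullet\bold{X}$. Applying the same argument to the $(P^{\bold{X}},\mathbb{F})$-local martingale $\widetilde{L}^{\bold{X}}$ itself, I would also obtain a representation $\widetilde{L}^{\bold{X}}_t=1+(\boldsymbol{\phi}\bullet\bold{X})_t$ for some $\mathbb{F}$-predictable $\boldsymbol{\phi}$.

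The concluding step recovers $N=L^{\bold{X}}\widehat N$ via integration by parts. Inserting the canonical decomposition $\bold{X}=\bold{X}_0+\bold{M}+\bold{A}$ and using the structure condition~(\ref{struct-cond}) (which expresses each $A^{i}$ in terms of $\alpha^{i}$ and $\langle M^{i}\rangle^{P,\mathbb{F}}$ and is a consequence of~\textbf{A2)}), together with the Girsanov identity forcing $d\langle L^{\bold{X}},M^{i}\rangle^{P,\mathbb{F}}=-L^{\bold{X}}_{-}\alpha^{i}\,d\langle M^{i}\rangle^{P,\mathbb{F}}$ (which holds precisely because $\bold{X}$ is a $P^{\bold{X}}$-local martingale), I expect all finite variation contributions to cancel, leaving $N$ as a vector stochastic integral with respect to $\bold{M}$ alone, say $N=\boldsymbol{\eta}\bullet\bold{M}$. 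Square-integrability of $N$ together with the isometry~(\ref{def-norm2}) then forces $\boldsymbol{\eta}\in\mathcal{L}^2(\bold{M},P,\mathbb{F})$, as required.

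The main obstacle is the drift-cancellation computation in this final step: one must carefully track the continuous and jump contributions to $[L^{\bold{X}},\boldsymbol{\gamma}\bullet\bold{X}]$, split $\bold{X}$ into its continuous martingale, purely discontinuous martingale, and predictable finite variation parts, and verify that after substituting~(\ref{struct-cond}) together with the Girsanov relation between $L^{\bold{X}}$ and $\boldsymbol{\alpha}$ the drift vanishes identically. A related subtlety is the localization: \textbf{A2)} guarantees only local square-integrability of the density, so the argument should proceed through an increasing sequence of stopping times, with the final passage to $\mathcal{L}^2(\bold{M},P,\mathbb{F})$ secured by~(\ref{def-norm2}).
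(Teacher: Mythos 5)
Your overall strategy --- transferring the $(P^{\bold{X}},\mathbb{F})$-p.r.p.\ of $\bold{X}$ to $\bold{M}$ under $P$ through the Girsanov change of measure, with \textbf{A2)} supplying the needed integrability --- is the same one the paper follows, but the paper delegates the entire transfer to Lemma 2.4 of \cite{jean-song15}: it applies that lemma to the Girsanov transform $\tilde{X}^i_t=X^i_t-\int_0^t(\widetilde{L}^{\bold{X}}_{s^-})^{-1}\,d\langle\widetilde{L}^{\bold{X}},X^i\rangle^{P^{\bold{X}},\mathbb{F}}_s$, which that lemma asserts to be a $(P,\mathbb{F})$-local martingale enjoying the $(P,\mathbb{F})$-p.r.p., and then only has to observe that $\tilde{X}^i-M^i$ is a predictable finite-variation local martingale, hence continuous and null, so $\tilde{\bold{X}}=\bold{M}$. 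You instead set out to prove the transfer from scratch, and the decisive step is exactly the one you defer as ``the main obstacle''.

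That step is a genuine gap, and it is more than a routine drift computation. Integration by parts on $N=L^{\bold{X}}\widehat{N}$ gives $N=(L^{\bold{X}}_{-}\boldsymbol{\gamma})\bullet\bold{X}+\widehat{N}_{-}\bullet L^{\bold{X}}+[L^{\bold{X}},\widehat{N}]$. The Girsanov relation does cancel $(L^{\bold{X}}_{-}\boldsymbol{\gamma})\bullet\bold{A}$ against the predictable compensator $\langle L^{\bold{X}},\widehat{N}\rangle^{P,\mathbb{F}}$, but you are then left with $\widehat{N}_{-}\bullet L^{\bold{X}}$ and with the local-martingale part of $[L^{\bold{X}},\widehat{N}]$, neither of which is an integral with respect to $\bold{M}$ until you have (i) represented $L^{\bold{X}}$ itself in the stable space generated by $\bold{M}$ --- your representation $\widetilde{L}^{\bold{X}}=1+\boldsymbol{\phi}\bullet\bold{X}$ concerns the reciprocal density and still has to be inverted, which brings in the jumps of $\widetilde{L}^{\bold{X}}$ --- and (ii) invoked associativity and stability of vector stochastic integrals to absorb the bracket term into $\mathcal{Z}^2(\bold{M})$. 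None of this is carried out, so as written the argument does not yet show that $N$ lies in the stable space of $\bold{M}$; it reduces the Proposition to an unproved instance of precisely the invariance lemma that the paper cites. Either carry out steps (i)--(ii) in full, or follow the paper and quote Lemma 2.4 of \cite{jean-song15}, after which only the easy identification $\tilde{\bold{X}}=\bold{M}$ remains.
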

\begin{proof}
Set
$$
\tilde{X^i_t}=X^i_t-\int_0^t\frac{1}{\widetilde{L}^X_{s^-}}\;d\langle
\widetilde{L}^{\bold{X}}, X^i
\rangle^{P^{\bold{X}},\mathbb{F}}_{s},\;\;\;i=1,\ldots,m.
$$
Then by Lemma 2.4 in \cite{jean-song15} the process
$\bold{\tilde{X}}=(\tilde{X^1_t},\ldots,\tilde{X^m_t})_{t\in[0,T]}$ is a
$(P, \mathbb{F})$-local martingale which enjoys the
$(P,\mathbb{F})$-p.r.p.~.Moreover it coincides with $\bold{M}$.~In fact,
fixed $i\in (1,\ldots,m)$,
$$
\tilde{X^i_t}-M^i_t=X^i_0+\int_0^t\alpha^i_s\,d\langle
M^i\rangle^{P,\mathbb{F}}_s-\int_0^t\frac{1}{\widetilde{L}^{\bold{X}}_{s^-}}\;d\langle\widetilde{L}^{\bold{X}}, X^i \rangle^{P^{\bold{X},\mathbb{F}}}_s
$$
is a predictable $(\mathbb{F},P)$-local martingale, so it has to
be continuous (see Theorem 43 Chapter IV in \cite
{kall97}).~Moreover it has finite variation, so that it is necessarily
null.
\end{proof}
As in  formula  (\ref{def-B and C}), for any fixed $t\in [0,T]$,
consider the matrix $C^{\bold{M}}_t$ with generic element defined
by
$$c^\mathbf{M}_{ij}(t):=\frac{d\langle M^i,M^j\rangle_t^{P,\mathbb{F}}}{dB^{\bold{M}}_t}$$
with
 $$B^{\bold{M}}_t:=\sum_{i=1}^m \langle
 M^i\rangle_t^{P,\mathbb{F}}.$$
\begin{cor}\label{cor-boundedness-jumps}
Let $\hat{\lambda}$ be defined
by
 \begin{align*}
  C^{\bold{M}}_t\,\boldsymbol{\hat{\lambda}}_t:=\boldsymbol{\gamma}_t,\;\;0\leq t\leq T
 \end{align*}
 where
$$\gamma^i_t:=\alpha^i_t\,c^M_{ii}(t).$$
Then
\begin{itemize}
\item [i)]
 $\boldsymbol{\hat{\lambda}}^{tr}\Delta \bold{M}<1$;
\item [ii)]
 $P^\mathbf{X}$ coincides with the minimal martingale measure for $\bold{X}$.
\end{itemize}
\end{cor}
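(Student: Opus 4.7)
The plan is to identify the Radon--Nikodym martingale $L^{\bold{X}}$ as the Doléans exponential $\mathcal{E}(-\boldsymbol{\hat{\lambda}}\bullet \bold{M})$; once this identification is established, both statements follow immediately, since (i) a Doléans exponential is strictly positive exactly when its jumps exceed $-1$, and (ii) the density $\mathcal{E}(-\boldsymbol{\hat{\lambda}}\bullet \bold{M})$ is by definition that of the Föllmer--Schweizer minimal martingale measure for $\bold{X}$.

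To carry out the identification, I would first invoke Proposition \ref{prop-pred-mart-rep}: $\bold{M}$ enjoys the $(P,\mathbb{F})$-p.r.p., so the locally square-integrable $(P,\mathbb{F})$-martingale $L^{\bold{X}}$ admits a representation $L^{\bold{X}}=1+\boldsymbol{\eta}\bullet \bold{M}$ for some $\mathbb{F}$-predictable $\boldsymbol{\eta}\in L^2_{loc}(\bold{M})$. Next I would apply Girsanov's theorem to the change of measure from $P$ to $P^{\bold{X}}$: the compensator of $\bold{M}$ under $P^{\bold{X}}$ has the form $\int \frac{1}{L^{\bold{X}}_{s^-}}\,d\langle L^{\bold{X}},\bold{M}\rangle_s^{P,\mathbb{F}}$, and since $\bold{A}$ is predictable and $\bold{X}$ is a $(P^{\bold{X}},\mathbb{F})$-martingale, this compensator must equal $-\bold{A}$.

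Writing this componentwise and using the representation of $L^{\bold{X}}$ gives
$$A^i_t=-\int_0^t \frac{1}{L^{\bold{X}}_{s^-}}\,(C^{\bold{M}}_s\boldsymbol{\eta}_s)^i\,dB^{\bold{M}}_s,$$
while the structure condition (\ref{struct-cond}) rewrites as $A^i_t=\int_0^t \gamma^i_s\,dB^{\bold{M}}_s=\int_0^t (C^{\bold{M}}_s\boldsymbol{\hat{\lambda}}_s)^i\,dB^{\bold{M}}_s$. Comparing, one obtains $C^{\bold{M}}\boldsymbol{\eta}=-L^{\bold{X}}_-\,C^{\bold{M}}\boldsymbol{\hat{\lambda}}$, which is precisely the equality needed inside the vector stochastic integral (modulo the kernel of $C^{\bold{M}}$, which is irrelevant since vector integration depends only on $C^{\bold{M}}\boldsymbol{\eta}$). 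Hence $\boldsymbol{\eta}\bullet \bold{M}=-(L^{\bold{X}}_-\boldsymbol{\hat{\lambda}})\bullet \bold{M}$, so $L^{\bold{X}}$ solves the Doléans SDE with solution $\mathcal{E}(-\boldsymbol{\hat{\lambda}}\bullet \bold{M})$.

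From here i) follows because $L^{\bold{X}}>0$ as a density of an equivalent measure, forcing $\Delta(-\boldsymbol{\hat{\lambda}}\bullet\bold{M})=-\boldsymbol{\hat{\lambda}}^{tr}\Delta\bold{M}>-1$; and ii) follows since, by construction, the candidate density for the minimal martingale measure of $\bold{X}$ is exactly $\mathcal{E}(-\boldsymbol{\hat{\lambda}}\bullet \bold{M})$. The main technical obstacle I expect is the possible singularity of $C^{\bold{M}}$: $\boldsymbol{\hat{\lambda}}$ and $\boldsymbol{\eta}$ are determined only up to the kernel of $C^{\bold{M}}$, so one must argue that this indeterminacy does not affect the vector stochastic integral, invoking the criterion that two $\mathbb{F}$-predictable integrands induce the same vector integral with respect to $\bold{M}$ whenever their difference lies in $\ker C^{\bold{M}}\,dB^{\bold{M}}$-a.e. (see \cite{shi-che02}). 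A secondary point is to justify that $L^{\bold{X}}_-\boldsymbol{\hat{\lambda}}$ indeed belongs to $L^2_{loc}(\bold{M})$, which uses the local square-integrability granted by \textbf{A2)} together with the integrability $\boldsymbol{\alpha}\in L^2_{loc}(\bold{M})$ already established.
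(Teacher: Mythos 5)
Your proposal is correct, and it reaches the same pivotal identity $L^{\bold{X}}=\mathcal{E}(-\boldsymbol{\hat{\lambda}}\bullet \bold{M})$ as the paper, but by a genuinely different mechanism. The paper starts from the Choulli--Stricker/Schweizer structure theorem for locally square-integrable strict martingale densities, which gives a priori $L^{\bold{X}}=\mathcal{E}(-\boldsymbol{\hat{\lambda}}\bullet \bold{M}+V)$ with $V$ a local martingale strongly orthogonal to every $M^i$, and then uses Proposition \ref{prop-pred-mart-rep} only to force $V\equiv 0$; parts i) and ii) then follow from strict positivity and from Proposition 3.1 of \cite{an-str93} respectively. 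You instead use the p.r.p.\ of $\bold{M}$ at the outset to write $L^{\bold{X}}=1+\boldsymbol{\eta}\bullet\bold{M}$ (so no orthogonal component ever appears) and pin down $\boldsymbol{\eta}$ by matching the Girsanov compensator $-\int L^{\bold{X},-1}_{s^-}\,d\langle L^{\bold{X}},M^i\rangle^{P,\mathbb{F}}_s$ against the structure condition (\ref{struct-cond}), using uniqueness of the canonical decomposition of the special semi-martingale $M^i$ under $P^{\bold{X}}$. What your route buys is self-containedness: it replaces the cited decomposition theorem by an elementary drift computation, at the price of having to handle explicitly the two technical points you correctly flag, namely that integrands are determined only modulo $\ker C^{\bold{M}}$ (which is harmless since the vector integral depends only on the $\mathcal{L}^2(\bold{M},P,\mathbb{F})$-equivalence class, cf.\ \cite{shi-che02}) and that $L^{\bold{X}}_-\boldsymbol{\hat{\lambda}}\in L^2_{loc}(\bold{M})$. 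What the paper's route buys is brevity and the fact that part ii) is literally the statement of the cited result once $V=0$; in your version you should still say a word on why $\mathcal{E}(-\boldsymbol{\hat{\lambda}}\bullet\bold{M})$ is \emph{the} minimal martingale density (i.e.\ that it preserves the martingale property of everything strongly orthogonal to $\bold{M}$), which is exactly the content of the Ansel--Stricker characterization the paper invokes.
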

\begin{proof}
 By hypotheses \textbf{A1)} and \textbf{A2)} $L^{\bold{X}}$ is a locally square integrable strict martingale density under $P$, that is $L^{\bold{X}}\in \mathcal{M}^2_{loc}(P,\mathbb{F})$, so that for all $t\in [0,T]$
 $$L^{\bold{X}}_t=\mathcal{E}\left(- (\boldsymbol{\hat{\lambda}}\,\bullet\,\bold{M})_t + V_t\right)$$
or equivalently $L^{\bold{X}}$
solves the equation
\begin{equation*}
Z=1-Z\boldsymbol{\hat{\lambda}}\,\bullet\,\bold{M} + V,
\end{equation*}
where $V$ is a $(P,\mathbb{F})$-local martingale with real values, null at zero and $(P,\mathbb{F})$-strongly orthogonal to $M^i$ for each $i\in(1,\ldots, m)$  (see Theorem 2.2. in \cite{cho-stri96} or Theorem 1 in \cite{schw95}).\\
Previous proposition forces $V$ to be null so that for all $t\in [0,T]$
$$L^{\bold{X}}_t=\rm{exp}\left(- (\boldsymbol{\hat{\lambda}}\,\bullet\,\bold{M})_t- \sum_{i,j=1}^{m}\,\int_0^t\,\hat{\lambda}^{i}_s
\hat{\lambda}^j_s\,d\langle M^{c,i}, M^{c,j}\rangle_s \right)\prod_{0\leq s\leq t} (1-\boldsymbol{\hat{\lambda}}^{tr}_s\Delta \bold{M}_s)\rm{e}^{\boldsymbol{\hat{\lambda}}^{tr}_s\Delta \bold{M}_s}$$
where $\bold{M}^c=(M^{c,1},\ldots, M^{c,m})$ is the continuous
martingale part of $\bold{M}$.~Since $L^{\bold{X}}$ by assumption
is the derivative of an equivalent change of measure,
then it has to be strictly positive so that part i) follows.\bigskip\\
Proposition 3.1 in \cite{an-str93} proves part ii).
\end{proof}
\begin{remark}
We recall that condition $\boldsymbol{\hat{\lambda}}^{tr}\Delta
\bold{M}<1$ doesn't follow by the existence of a strict martingale
density (see Section 4 in \cite{ans_str92} for a counter-example
in the one-dimensional case, where the condition turns into
$\alpha\Delta M<1$).~Only when $P(\bold{X},\mathbb{F})$ is a
singleton the condition necessarily follows.
\end{remark}
\section{Two bases of martingales}\label{sec:adding-semimg}
In this section we present the multidimensional version of Theorem 4.11 in \cite{caltor15}.~The key assumption is\vspace{0.5em}\\
\textbf{A3)}\;for any $i\in (1,\ldots,m)$ and $j\in (1,\ldots,n)$,
$M^i$ and $N^j$ are real $(P,\mathbb{G})$-strongly orthogonal
martingales, where
$$\mathbb{G}:=\mathbb{F}\vee\mathbb{H},$$
or equivalently for any $i\in (1,\ldots,m)$ and $j\in
(1,\ldots,n)$, the process $[M^i,N^j]$ is
a real uniformly integrable $(P,\mathbb{G})$-martingale  with null initial value.\bigskip\\
Let us denote by $\bold{[M,N]^{V}}$ the process
$$\left([M^1,N^1],\ldots , [M^1,N^n], [M^2,N^1],\ldots , [M^2,N^n],\ldots , [M^m,N^1],\ldots , [M^m,N^n]\right).$$
Then, under assumption \textbf{A3)}, $\bold{[M,N]^{V}}$ is a
$(P,\mathbb{G})$-martingale with values in $\mathbb{R}^{mn}$.\vspace{0.5em}\\
Before announcing the main theorem we state a general result.
\begin{lemma}\label{lemma_orth stable spaces}
On a filtered probability space $(\Omega,
\mathcal{A},\mathbb{A},R)$ let consider two processes
 $\boldsymbol{\mu}=(\mu^1,\ldots, \mu^r)$ and
$\boldsymbol{\mu^\prime}=(\mu^{\prime,1},\ldots, \mu^{\prime,s})$
 such that, for all $i=1,\ldots, r$ and $j=1,\ldots, s$,   $\mu^i$ and $\mu^{\prime,j}$ are
$(R,\mathbb{A})$-strongly orthogonal real martingales in
$\mathcal{M}^2(R,\mathbb{A})$.~Then
\begin{itemize}
\item [i)] for any $\boldsymbol{\xi}\in \mathcal{L}^2(\boldsymbol{\mu}, R, \mathbb{A})$ and
$\boldsymbol{\eta}\in \mathcal{L}^2(\boldsymbol{\mu^\prime}, R,
\mathbb{A})$, the processes $\boldsymbol{\xi}\bullet
\boldsymbol{\mu}$ and $\boldsymbol{\eta}\bullet
\boldsymbol{\mu^\prime}$,  are real $(R,\mathbb{A})$-strongly
orthogonal martingales, that is $\boldsymbol{\xi}\bullet
\boldsymbol{\mu} \cdot \boldsymbol{\eta}\bullet
\boldsymbol{\mu^\prime}$ is a real uniformly integrable
$(R,\mathbb{A})$-martingale with null initial value.
\item [ii)] Moreover let $\boldsymbol{\mu^{\prime\prime}}=(\mu^{\prime\prime,1},\ldots, \mu^{\prime\prime,w})$
be a $(R,\mathbb{A})$-martingale such that, for all fixed $h\in
(1,\ldots, w)$, either the real processes $\mu^{\prime\prime,h}$
and $\mu^{i}$, for every $i=1,\ldots, r$, or the real processes
$\mu^{\prime\prime,h}$ and $\mu^{\prime,j}$, for every
$j=1,\ldots, s$, are $(R,\mathbb{A})$-strongly orthogonal
martingales in $\mathcal{M}^2(R,\mathbb{A})$.\\Then for all
$\boldsymbol{\Theta}\in
\mathcal{L}^2((\boldsymbol{\mu},\boldsymbol{\mu^\prime},
\boldsymbol{\mu^{\prime\prime}}), R, \mathbb{A})$ there exists a
unique triplet $\boldsymbol{\xi}, \boldsymbol{\xi^\prime},
\boldsymbol{\xi^{\prime\prime}}$ with $\boldsymbol{\xi}\in
\mathcal{L}^2(\boldsymbol{\mu}, R, \mathbb{A})$,
$\boldsymbol{\xi^\prime}\in \mathcal{L}^2(\boldsymbol{\mu^\prime},
R, \mathbb{A})$ and $\boldsymbol{\xi^{\prime\prime}}\in
\mathcal{L}^2(\boldsymbol{\mu^{\prime\prime}}, R, \mathbb{A})$
such that
$$\boldsymbol{\Theta}\bullet (\boldsymbol{\mu},\boldsymbol{\mu^\prime}, \boldsymbol{\mu^{\prime\prime}}) =
\boldsymbol{\xi}\bullet \boldsymbol{\mu} + \boldsymbol{\xi^{\prime}}\bullet \boldsymbol{\mu^\prime} + \boldsymbol{\xi^{\prime\prime}}\bullet \boldsymbol{\mu^{\prime\prime}}.$$
\end{itemize}
\end{lemma}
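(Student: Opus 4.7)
I would work within the theory of stable subspaces of $\mathcal{M}^2(R,\mathbb{A})$ from \cite{jacod}: the stable space $\mathcal{Z}^2(\boldsymbol{\mu})$ coincides with the set of vector stochastic integrals with respect to $\boldsymbol{\mu}$ (Theorem 4.60 in \cite{jacod}), and its orthogonal complement $\mathcal{Z}^2(\boldsymbol{\mu})^{\perp}$ in $\mathcal{M}^2(R,\mathbb{A})$ is itself a stable subspace, consisting exactly of the martingales strongly orthogonal to each $\mu^{i}$. For Part i), the hypothesis gives $\mu^{\prime,j}\in\mathcal{Z}^2(\boldsymbol{\mu})^{\perp}$ for every $j$; stability of the complement then forces $\mathcal{Z}^2(\boldsymbol{\mu^\prime})\subseteq\mathcal{Z}^2(\boldsymbol{\mu})^{\perp}$, and so $\boldsymbol{\eta}\bullet\boldsymbol{\mu^\prime}\in\mathcal{Z}^2(\boldsymbol{\mu^\prime})$ is strongly orthogonal to $\boldsymbol{\xi}\bullet\boldsymbol{\mu}\in\mathcal{Z}^2(\boldsymbol{\mu})$, which is the claim.

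\textbf{Part ii): splitting the integral.} Split the long integrand $\boldsymbol{\Theta}$ canonically as $(\boldsymbol{\xi},\boldsymbol{\xi^\prime},\boldsymbol{\xi^{\prime\prime}})$ according to the three packets. The identity $\boldsymbol{\Theta}\bullet(\boldsymbol{\mu},\boldsymbol{\mu^\prime},\boldsymbol{\mu^{\prime\prime}})=\boldsymbol{\xi}\bullet\boldsymbol{\mu}+\boldsymbol{\xi^\prime}\bullet\boldsymbol{\mu^\prime}+\boldsymbol{\xi^{\prime\prime}}\bullet\boldsymbol{\mu^{\prime\prime}}$ follows from linearity of the vector stochastic integral together with the fact that zero-extending an integrand does not change the integral, both read off from the characterization $[\boldsymbol{\zeta}\bullet\boldsymbol{S},N]=\sum_k \zeta^k \bullet [S^k,N]$ valid for every $N\in\mathcal{M}^2(R,\mathbb{A})$. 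Integrability of each piece is then seen via the norm identity $\|\boldsymbol{\zeta}\|^2_{\mathcal{L}^2(\boldsymbol{S})}=E[[\boldsymbol{\zeta}\bullet\boldsymbol{S}]_T]$ of (\ref{def-norm2}), applied to each zero-extended sub-integrand.

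\textbf{Main obstacle.} The subtle points are the uniqueness of the triple and the rigorous verification that each summand lies in its own $\mathcal{L}^2$ space; both rest on the orthogonality structure and are where the hypothesis on $\boldsymbol{\mu^{\prime\prime}}$ is really used. The combined covariance matrix $\tilde{C}$ has a vanishing cross-block between $\boldsymbol{\mu}$ and $\boldsymbol{\mu^\prime}$ (by Part i)), but it is not block-diagonal, since each row of the $\boldsymbol{\mu^{\prime\prime}}$-block couples with at most one of the other two packets. The assumption gives a partition $\boldsymbol{\mu^{\prime\prime}}=(\boldsymbol{\mu^{\prime\prime}_I},\boldsymbol{\mu^{\prime\prime}_J})$ in which $\boldsymbol{\mu^{\prime\prime}_I}$ is strongly orthogonal to every component of $\boldsymbol{\mu}$ and $\boldsymbol{\mu^{\prime\prime}_J}$ to every component of $\boldsymbol{\mu^\prime}$. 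Applying Part i) to each of these sub-packets against the opposite packet yields the orthogonalities of stable subspaces that kill the offending cross-contributions, thus separating the three summands energetically and closing the proof.
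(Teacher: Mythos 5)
Your part~i) is correct and is essentially the paper's argument: the orthogonal complement of $\mathcal{Z}^2(\boldsymbol{\mu})$ is stable and contains every $\mu^{\prime,j}$, hence contains $\mathcal{Z}^2(\boldsymbol{\mu^\prime})$, and Theorem 4.60 of Jacod identifies these stable spaces with the sets of vector integrals. The gap is in part~ii). You propose to split $\boldsymbol{\Theta}$ ``canonically'' into the three packets and claim the identity follows from linearity plus the fact that zero-extending an integrand does not change the integral. That is precisely the step that cannot be read off: membership of $\boldsymbol{\Theta}$ in $\mathcal{L}^2((\boldsymbol{\mu},\boldsymbol{\mu^\prime},\boldsymbol{\mu^{\prime\prime}}),R,\mathbb{A})$ means finiteness of $E^R\bigl[\int_0^T\boldsymbol{\Theta}^{tr}_t\,\widetilde{C}_t\,\boldsymbol{\Theta}_t\,dB_t\bigr]$ for the \emph{full} covariance matrix $\widetilde{C}$, and this does not imply that the zero-extended sub-integrands are separately admissible, because the off-diagonal blocks of $\widetilde{C}$ can produce cancellations in the quadratic form. (This is exactly the phenomenon the paper recalls from Cherny--Shiryaev: vector integrals strictly exceed componentwise ones, and the latter do not form a closed space.) Your repair via the partition $\boldsymbol{\mu^{\prime\prime}}=(\boldsymbol{\mu^{\prime\prime}_I},\boldsymbol{\mu^{\prime\prime}_J})$ does not fix this: the blocks coupling $\boldsymbol{\mu}$ with $\boldsymbol{\mu^{\prime\prime}_J}$ and $\boldsymbol{\mu^\prime}$ with $\boldsymbol{\mu^{\prime\prime}_I}$ (and $\boldsymbol{\mu^{\prime\prime}_I}$ with $\boldsymbol{\mu^{\prime\prime}_J}$) can all survive, so $\widetilde{C}$ is still not block-diagonal with respect to the three target packets and the quadratic form does not decompose into three nonnegative pieces. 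The uniqueness of the triplet is only asserted, not proved.

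The paper avoids splitting the integrand altogether. It introduces the set $\mathcal{I}$ of all sums $\mathbf{H}\bullet\boldsymbol{\mu}+\mathbf{H^\prime}\bullet\boldsymbol{\mu^\prime}+\mathbf{H^{\prime\prime}}\bullet\boldsymbol{\mu^{\prime\prime}}$, uses point~i) to show that the three cross expectations $E^R[(\mathbf{H}\bullet\boldsymbol{\mu})_T(\mathbf{H^\prime}\bullet\boldsymbol{\mu^\prime})_T]$, etc., vanish, so that the map $(\mathbf{H},\mathbf{H^\prime},\mathbf{H^{\prime\prime}})\mapsto \mathbf{H}\bullet\boldsymbol{\mu}+\mathbf{H^\prime}\bullet\boldsymbol{\mu^\prime}+\mathbf{H^{\prime\prime}}\bullet\boldsymbol{\mu^{\prime\prime}}$ is an isometry from the product Hilbert space into $\mathcal{M}^2(R,\mathbb{A})$; hence $\mathcal{I}$ is closed, and being stable it coincides with $\mathcal{Z}^2(\boldsymbol{\mu},\boldsymbol{\mu^\prime},\boldsymbol{\mu^{\prime\prime}})$, which by Jacod's Theorem 4.60 is the set of all $\boldsymbol{\Theta}\bullet(\boldsymbol{\mu},\boldsymbol{\mu^\prime},\boldsymbol{\mu^{\prime\prime}})$. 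Existence of \emph{some} triplet (not the canonical restriction of $\boldsymbol{\Theta}$) follows, and uniqueness comes from the same Pythagorean identity applied to the difference of two representations. To salvage your route you would have to prove admissibility of the restricted integrands, which is exactly as hard as (indeed, essentially equivalent to) the statement itself.
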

\begin{proof}
As far as the first statement is concerned, fixed $i\in (1,\ldots,
r)$, $\mu^i$ is $(R,\mathbb{A})$-strongly orthogonal to all
elements of $\mathcal{Z}^2(\boldsymbol{\mu^\prime})$ (see point a)
of Theorem 4.7, page 116 in \cite{jacod}), that is to every vector
integral with respect to $\boldsymbol{\mu^\prime}$ of a process in
$\mathcal{L}^2(\boldsymbol{\mu^\prime}, R, \mathbb{A})$ (see
Theorem 4.60 page 143 in \cite{jacod}).~Then, since $i$ is
arbitrary, by the same tools and a symmetric argument, the vector
integral with respect to $\boldsymbol{\mu^\prime}$ of a fixed
process in $\mathcal{L}^2(\boldsymbol{\mu^\prime}, R, \mathbb{A})$
is $(R,\mathbb{A})$-orthogonal to the vector integral with respect
to $\boldsymbol{\mu}$ of a fixed process in $\mathcal{L}^2(\boldsymbol{\mu}, R, \mathbb{A})$.\bigskip\\
The second statement follows by a slight generalization of Theorem 36, Chapter IV of \cite{prott}, which implies the result
when $\boldsymbol{\mu}, \boldsymbol{\mu^{\prime}}$ and $\boldsymbol{\mu^{\prime\prime}}$ are real martingales.\\
Let $\mathcal{I}$ be the space of processes
$$\bold{H}\bullet \boldsymbol{\mu} + \bold{H^{\prime}}\bullet \boldsymbol{\mu^\prime} + \bold{H^{\prime\prime}}\bullet \boldsymbol{\mu^{\prime\prime}}$$
with $\bold{H}\in\mathcal{L}^2(\boldsymbol{\mu}, R, \mathbb{A})$,
$\bold{H^\prime}\in \mathcal{L}^2(\boldsymbol{\mu^\prime}, R,
\mathbb{A})$, $\bold{H^{\prime\prime}}\in
\mathcal{L}^2(\boldsymbol{\mu^{\prime\prime}}, R,
\mathbb{A})$.~The stable space $\mathcal{Z}^2(\boldsymbol{\mu},
\boldsymbol{\mu^\prime}, \boldsymbol{\mu^{\prime\prime}})$
contains $\mathcal{Z}^2(\boldsymbol{\mu})$,
$\mathcal{Z}^2(\boldsymbol{\mu^\prime})$ and $\mathcal{Z}^2(
\boldsymbol{\mu^{\prime\prime}})$ and therefore it contains
$\mathcal{I}$ (see Proposition 4.5 page 114 and Theorem 4.35 page
130 in \cite{jacod}).~Moreover $\mathcal{I}$ turns out to be
stable and then it coincides with $\mathcal{Z}^2(\boldsymbol{\mu},
\boldsymbol{\mu^\prime},
\boldsymbol{\mu^{\prime\prime}})$\footnote{Note that $\mathcal{I}$
contains all the stochastic integrals with respect to any
components of $\mu$, $\mu^\prime$ and $\mu^{\prime \prime}$
}.\bigskip\\
First of all we show that $\mathcal{I}$ is closed. To this end we
consider the application which maps
$\mathcal{L}^2(\boldsymbol{\mu}, R,
\mathbb{A})\times\mathcal{L}^2(\boldsymbol{\mu^\prime}, R,
\mathbb{A})\times\mathcal{L}^2(\boldsymbol{\mu^{\prime\prime}}, R,
\mathbb{A})$ into $\mathcal{M}^2(R, \mathbb{A})$ in this way
$$\left(\bold{H}, \bold{H^{\prime}}, \bold{H^{\prime\prime}}\right)\rightarrow \bold{H}\bullet \boldsymbol{\mu} + \bold{H^{\prime}}\bullet \boldsymbol{\mu^\prime} + \bold{H^{\prime\prime}}\bullet \boldsymbol{\mu^{\prime\prime}}.$$
We prove that this application is an isometry.~We then conclude
the proof by observing that $\mathcal{I}$ is the image through the
above application of $\mathcal{L}^2(\boldsymbol{\mu}, R,
\mathbb{A})\times\mathcal{L}^2(\boldsymbol{\mu^\prime}, R,
\mathbb{A})\times\mathcal{L}^2(\boldsymbol{\mu^{\prime\prime}}, R,
\mathbb{A})$, which is an Hilbert space with scalar product
\begin{align*}&<\left(\bold{H}, \bold{H^{\prime}}, \bold{H^{\prime\prime}}\right), \left(\bold{K}, \bold{K^{\prime}}, \bold{K^{\prime\prime}}\right)>_{\mathcal{L}^2(\boldsymbol{\mu}, R, \mathbb{A})\times\mathcal{L}^2(\boldsymbol{\mu^\prime}, R, \mathbb{A})\times\mathcal{L}^2(\boldsymbol{\mu^{\prime\prime}}, R, \mathbb{A})}:=\cr &
E^R\left[\int_0^T\,\bold{H}^{tr}_t\,C^{\mu}_t\,\bold{K}_t\,dB^{\boldsymbol{\mu}}_t\right]+
E^R\left[\int_0^T\,(\bold{H}^{\prime})^{tr}_t\,C^{\boldsymbol{\mu}^\prime}_t\,\bold{K^{\prime}}_t\,dB^{\boldsymbol{\mu^\prime}}_t\right]+
E^R\left[\int_0^T\,(\bold{H^{\prime\prime}})^{tr}_t\,C^{\mu^{\prime\prime}}_t\,\bold{K^{\prime\prime}}_t\,dB^{\boldsymbol{\mu^{\prime\prime}}}_t\right].
\end{align*}
We refer to (\ref{def-B and C}) for the notations in the addends of the right-hand side, so that
for example
$$B^{\boldsymbol{\mu}}_t:=\sum_{i=1}^r <\mu^i>_t^{R,\mathbb{A}}\;\;\;\;\;\;c^{\boldsymbol{\mu}}_{ij}(t):=\frac{d<\mu^i,\mu^j>_t^{R,\mathbb{A}}}{dB^{\boldsymbol{\mu}}_t},\;\;\;i,i\in(1,\ldots, r).$$
As a consequence
$$\|\left(\bold{H}, \bold{H^{\prime}}, \bold{H^{\prime\prime}}\right)\|^2_{\mathcal{L}^2(\boldsymbol{\mu}, R,
\mathbb{A})\times\mathcal{L}^2(\boldsymbol{\mu^\prime}, R, \mathbb{A})\times\mathcal{L}^2(\boldsymbol{\mu^{\prime\prime}}, R, \mathbb{A})}:=
\|\bold{H}\|^2_{\mathcal{L}^2(\boldsymbol{\mu}, R,
\mathbb{A})}+\|\bold{H^\prime}\|^2_{\mathcal{L}^2(\boldsymbol{\mu^\prime},
R,
\mathbb{A})}+\|\bold{H^{\prime\prime}}\|^2_{\mathcal{L}^2(\boldsymbol{\mu^{\prime\prime}},
R, \mathbb{A})}$$ with
$$\|\bold{H}\|^2_{\mathcal{L}^2(\boldsymbol{\mu}, R, \mathbb{A})}=E^R\left[\int_0^T\,\bold{H}^{tr}_t\,C^{\mu}_t\,\bold{H}_t\,dB^{\boldsymbol{\mu}}_t\right]$$
that is
$$\|\bold{H}\|^2_{\mathcal{L}^2(\boldsymbol{\mu}, R, \mathbb{A})}=E^R\big[\left[\bold{H}\bullet \boldsymbol{\mu}\right ]_T\big]=
E^R\big[\left<\bold{H}\bullet \boldsymbol{\mu}\right >^{R, \mathbb{A}}_T\big]=E^R\big[\left(\bold{H}\bullet \boldsymbol{\mu}\right)^2_T\big].$$
The first equality derives from the general formula (\ref{def-norm2}), the second and the last equalities derive from a characterization and
from the definition of the $(R,\mathbb{A})$-sharp variation process of $\bold{H}\bullet \boldsymbol{\mu}$ respectively.\bigskip\\
Similarly
$$\|\bold{H^{\prime}}\|^2_{\mathcal{L}^2(\boldsymbol{\mu^\prime}, R, \mathbb{A})}=E^R\big[\left(\bold{H^{\prime}}\bullet \boldsymbol{\mu^\prime}\right)^2_T\big],
\;\;\;\;\;\;\|\bold{H^{\prime}}\|^2_{\mathcal{L}^2(\boldsymbol{\mu^{\prime\prime}}, R, \mathbb{A})}=E^R\big[\left(\bold{H^{\prime\prime}}\bullet \boldsymbol{\mu^{\prime\prime}}\right)^2_T\big].$$
Therefore it holds
$$\|\left(\bold{H}, \bold{H^{\prime}}, \bold{H^{\prime\prime}}\right)\|^2_{\mathcal{L}^2(\boldsymbol{\mu}, R, \mathbb{A})\times\mathcal{L}^2(\boldsymbol{\mu^\prime},
R, \mathbb{A})\times\mathcal{L}^2(\boldsymbol{\mu^{\prime\prime}}, R, \mathbb{A})}=E^R\big[\left(\bold{H}\bullet \boldsymbol{\mu}\right)^2_T+
\left(\bold{H^{\prime}}\bullet \boldsymbol{\mu^{\prime}}\right)^2_T+\left(\bold{H^{\prime\prime}}\bullet \boldsymbol{\mu^{\prime\prime}}\right)^2_T\big].$$
At the same time (see (\ref{def-norm3}))
\begin{align*}&\|\bold{H}\bullet \boldsymbol{\mu} + \bold{H^{\prime}}\bullet \boldsymbol{\mu^\prime} + \bold{H^{\prime\prime}}\bullet
\boldsymbol{\mu^{\prime\prime}}\|^2_{\mathcal{M}^2(R, \mathbb{A})}= E^R\left[\Big(\left(\bold{H}\bullet \boldsymbol{\mu}\right)_T +
 \left(\bold{H^\prime}\bullet \boldsymbol{\mu^\prime}\right)_T + \left(\bold{H^{\prime\prime}}\bullet \boldsymbol{\mu^{\prime\prime}}\right)_T\Big)^2\right].\end{align*}
By point i)
$$\bold{H}\bullet \boldsymbol{\mu} \cdot \bold{H^\prime}\bullet \boldsymbol{\mu^\prime},\;\;\;\;\bold{H}\bullet \boldsymbol{\mu} \cdot \bold{H^{\prime\prime}}\bullet
\boldsymbol{\mu^{\prime\prime}},\;\;\;\;\bold{H^\prime}\bullet \boldsymbol{\mu^\prime} \cdot \bold{H^{\prime\prime}}\bullet \boldsymbol{\mu^{\prime\prime}}$$
are centered $(R, \mathbb{A})$-martingales and in particular
$$E^R\left[(\bold{H}\bullet \boldsymbol{\mu})_T \cdot (\bold{H^\prime}\bullet \boldsymbol{\mu^\prime})_T\right]=
E^R\left[(\bold{H}\bullet \boldsymbol{\mu})_T \cdot (\bold{H^{\prime\prime}}\bullet \boldsymbol{\mu^{\prime\prime}})_T\right]=
E^R\left[(\bold{H^\prime}\bullet \boldsymbol{\mu^\prime})_T \cdot (\bold{H^{\prime\prime}}\bullet \boldsymbol{\mu^{\prime\prime}})_T\right]=0.$$
Then
\begin{align*}&\|\bold{H}\bullet \boldsymbol{\mu} + \bold{H^{\prime}}\bullet \boldsymbol{\mu^\prime} + \bold{H^{\prime\prime}}\bullet \boldsymbol{\mu^{\prime\prime}}\|^2_{\mathcal{M}^2(R, \mathbb{A})}=
E^R\big[\left(\bold{H}\bullet \boldsymbol{\mu}\right)^2_T+
\left(\bold{H^{\prime}}\bullet \boldsymbol{\mu^{\prime}}\right)^2_T+\left(\bold{H^{\prime\prime}}\bullet \boldsymbol{\mu^{\prime\prime}}\right)^2_T\big]\end{align*}
and therefore we get
$$\|\left(\bold{H}, \bold{H^{\prime}}, \bold{H^{\prime\prime}}\right)\|^2_{\mathcal{L}^2(\boldsymbol{\mu}, R, \mathbb{A})\times\mathcal{L}^2(\boldsymbol{\mu^\prime}, R,
 \mathbb{A})\times\mathcal{L}^2(\boldsymbol{\mu^{\prime\prime}}, R, \mathbb{A})}=\|\bold{H}\bullet \boldsymbol{\mu} + \bold{H^{\prime}}\bullet \boldsymbol{\mu^\prime} +
  \bold{H^{\prime\prime}}\bullet \boldsymbol{\mu^{\prime\prime}}\|^2_{\mathcal{M}^2(R, \mathbb{A})}.$$
Then $\mathcal{I}$ is closed.~For proving that $\mathcal{I}$ is
stable it is sufficient to recall that its elements are sum of
elements
of stable subspaces.\bigskip\\
The uniqueness of the triplet follows observing that if it would exists a different triplet $\boldsymbol{\eta}, \boldsymbol{\eta^\prime}, \boldsymbol{\eta^{\prime\prime}}$ such that
$$\boldsymbol{\Theta}\bullet (\boldsymbol{\mu},\boldsymbol{\mu^\prime}, \boldsymbol{\mu^{\prime\prime}}) = \boldsymbol{\eta}\bullet \boldsymbol{\mu} + \boldsymbol{\eta^{\prime}}\bullet \boldsymbol{\mu^\prime} + \boldsymbol{\eta^{\prime\prime}}\bullet \boldsymbol{\mu^{\prime\prime}}$$
then
$$\|\left(\boldsymbol{\xi}\bullet \boldsymbol{\mu} + \boldsymbol{\xi^{\prime}}\bullet \boldsymbol{\mu^\prime} + \bold{\xi^{\prime\prime}}\bullet \boldsymbol{\mu^{\prime\prime}}\right) - \left(\boldsymbol{\eta}\bullet \boldsymbol{\mu} + \boldsymbol{\eta^{\prime}}\bullet \boldsymbol{\mu^\prime} + \boldsymbol{\eta^{\prime\prime}}\bullet \boldsymbol{\mu^{\prime\prime}}\right)\|^2_{\mathcal{M}^2(R, \mathbb{A})}=0.$$
The linearity of the vector integral and point i) would imply
$$E^R\big[\left((\boldsymbol{\xi}-\boldsymbol{\eta})\bullet \boldsymbol{\mu}\right)^2_T\big]+
E^R\big[\left((\boldsymbol{\xi^{\prime}}-\boldsymbol{\eta^{\prime}})\bullet
\boldsymbol{\mu^{\prime}}\right)^2_T\big]+E^R\big[\left((\boldsymbol{\xi^{\prime\prime}}-\boldsymbol{\eta^{\prime\prime}})\bullet
\boldsymbol{\mu^{\prime\prime}}\right)^2_T\big]=0.$$
\end{proof}
\begin{remark}\label{rem_orth stable spaces}
Taking into account that the stable space generated by a martingale coincides with the set of vector integrals
 with respect to the martingale, we can summarize point i) and point ii) of the above lemma as follows
$$\mathcal{Z}^2(\mu^\prime)\subset \mathcal{Z}^2(\mu)^\perp$$
$$\mathcal{Z}^2(\mu, \mu^{\prime}, \mu^{\prime\prime})=\mathcal{Z}^2(\mu)\oplus \mathcal{Z}^2(\mu^{\prime})\oplus \mathcal{Z}^2(\mu^{\prime\prime}).$$
\end{remark}
\begin{remark}\label{rem_estensione_mult}
We remark that point ii) of Lemma \ref{lemma_orth stable spaces}
holds when considering a finite number $d$ of multidimensional
$(R,\mathbb{A})$-martingales $\mu^i$, $i=1,...,d$ with mutually
pairwise strongly orthogonal components.~More precisely the
following equality holds
$$\mathcal{Z}^2(\mu^{1},..., \mu^{d})=\oplus_{i=1}^d \mathcal{Z}^2(\mu^{i}).$$
\end{remark}


%
\begin{thm}\label{th-main}
Assume \textbf{A1)}, \textbf{A2)} and \textbf{A3)}.~Then
\begin{itemize}
\item [i1)] $\mathcal{F}_T$ and $\mathcal{H}_T$ are
$P$-independent; \item [i2)] $\mathbb{G}$ fulfills the standard hypotheses; \item [i3)]
every $W$ in $\mathcal{M}^2(P,\mathbb{G})$ can be uniquely
represented as
\begin{align}\label{equation-first-basis}
W_t=W_0+ (\boldsymbol{\gamma}^W \bullet \bold{M})_t+ (\boldsymbol{\kappa}^W\bullet \bold{N})_t+
(\boldsymbol{\phi}^W\bullet \bold{[M,N]^V})_t,\;\;\;P\textrm{-a.s.}~
\end{align}
with $\boldsymbol{\gamma}^W$ in $\mathcal{L}^2(\bold{M}, P, \mathbb{G})$, $\boldsymbol{\kappa}^W$
in $\mathcal{L}^2(\bold{N}, P, \mathbb{G})$ and $\boldsymbol{\phi}^W$ in
$\mathcal{L}^2(\bold{[M,N]^V}, P, \mathbb{G})$;
\item[i4)] there exists a probability measure $Q$ on $(\Omega,
\mathcal{G}_T)$ such that $(\bold{X},\bold{Y},\bold{[X,Y]^V})$
enjoys the $(Q, \mathbb{G})$-p.r.p..~More precisely every
 $Z$ in $\mathcal{M}^2(Q,\mathbb{G})$ can be uniquely represented as
\begin{align*}
Z_t=Z_0+(\boldsymbol{\eta}^Z\bullet \bold{X})_t + (\boldsymbol{\theta}^Z\bullet \bold{Y})_t +
(\boldsymbol{\zeta}^Z\bullet \bold{[X,Y]^V})_t\;\;\;Q\textrm{-a.s.},
\end{align*}
with $\boldsymbol{\eta}^Z$ in $\mathcal{L}^2(\bold{X}, Q, \mathbb{G})$, $\boldsymbol{\theta}^Z$ in
$\mathcal{L}^2(\bold{Y}, Q, \mathbb{G})$ and $\boldsymbol{\zeta}^Z$ in
$\mathcal{L}^2(\bold{[X,Y]^V}, Q, \mathbb{G})$.
\end{itemize}
\end{thm}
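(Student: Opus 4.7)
My plan for (i1) is to exploit the p.r.p.\ of $\bold{M}$ under $(P,\mathbb{F})$ and of $\bold{N}$ under $(P,\mathbb{H})$ (Proposition \ref{prop-pred-mart-rep}), so that any centered $F\in L^2(\mathcal{F}_T,P)$ and $H\in L^2(\mathcal{H}_T,P)$ can be written as $F=(\boldsymbol{\xi}^F\bullet\bold{M})_T$ and $H=(\boldsymbol{\eta}^H\bullet\bold{N})_T$ with $\boldsymbol{\xi}^F\in\mathcal{L}^2(\bold{M},P,\mathbb{F})$ and $\boldsymbol{\eta}^H\in\mathcal{L}^2(\bold{N},P,\mathbb{H})$. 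By assumption \textbf{A3)} all components of $\bold{M}$ and $\bold{N}$ are pairwise $(P,\mathbb{G})$-strongly orthogonal; hence Lemma \ref{lemma_orth stable spaces} i) applied in the filtration $\mathbb{G}$ gives that $(\boldsymbol{\xi}^F\bullet\bold{M})\cdot(\boldsymbol{\eta}^H\bullet\bold{N})$ is a centered $(P,\mathbb{G})$-martingale, so $E^P[FH]=0$. A monotone-class argument yields $P$-independence of $\mathcal{F}_T$ and $\mathcal{H}_T$. For (i2), once independence is established, the standard fact that the join of two independent, right-continuous, $P$-complete filtrations is again right-continuous and $P$-complete delivers the standard hypotheses for $\mathbb{G}$.

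For (i3) I would proceed in three steps. First, the $P$-independence of $\mathbb{F}$ and $\mathbb{H}$ immediately ensures that $\bold{M}$ remains a $(P,\mathbb{G})$-martingale and $\bold{N}$ remains a $(P,\mathbb{G})$-martingale; this also transfers the $(P,\mathbb{F})$-p.r.p.\ of $\bold{M}$ into the property that $\bold{M}$ still represents every centered $\mathcal{F}_T$-variable in $\mathbb{G}$, and similarly for $\bold{N}$. Second, I would prove that, for every $i,j,h,k$, the real martingale $[M^i,N^j]$ is $(P,\mathbb{G})$-strongly orthogonal to $M^h$ and to $N^k$: writing $M^h[M^i,N^j]$ via integration by parts and using Yoeurp's decomposition together with the mutual $(P,\mathbb{G})$-strong orthogonality from \textbf{A3)} plus Corollary \ref{cor-boundedness-jumps} (which controls the jumps) reduces all covariation brackets to zero.

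Third, with all strong-orthogonality relations in hand, Lemma \ref{lemma_orth stable spaces} ii) and Remark \ref{rem_orth stable spaces} yield the direct-sum identity
\begin{equation*}
\mathcal{Z}^2(\bold{M},\bold{N},\bold{[M,N]^V})=\mathcal{Z}^2(\bold{M})\oplus\mathcal{Z}^2(\bold{N})\oplus\mathcal{Z}^2(\bold{[M,N]^V}),
\end{equation*}
together with the uniqueness of the decomposition. To conclude that this stable space is all of $\mathcal{M}^2(P,\mathbb{G})$, I would show that $P$ is the only equivalent martingale measure on $(\Omega,\mathcal{G}_T)$ for the enlarged process $(\bold{M},\bold{N},\bold{[M,N]^V})$: any competitor $\widetilde P$ must agree with $P^{\bold{X}}$ on $\mathcal{F}_T$ and with $P^{\bold{Y}}$ on $\mathcal{H}_T$ by \textbf{A1)}, and the $(P,\mathbb{G})$-martingale property of $\bold{[M,N]^V}$ together with the $P$-independence of $\mathcal{F}_T$ and $\mathcal{H}_T$ forces $\widetilde P=P^{\bold{X}}\otimes P^{\bold{Y}}=P$ on $\mathcal{G}_T$. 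By the characterization recalled in \eqref{natural-completeness-2}, uniqueness of the e.m.m.\ is equivalent to the $(P,\mathbb{G})$-p.r.p.\ of $(\bold{M},\bold{N},\bold{[M,N]^V})$, which combined with the direct-sum decomposition yields \eqref{equation-first-basis}.

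For (i4) the idea is to build a decoupling measure $Q$ on $\mathcal{G}_T$ with density $L^{\bold{X}}_T\cdot L^{\bold{Y}}_T$ with respect to $P$, so that $Q|_{\mathcal{F}_T}=P^{\bold{X}}$, $Q|_{\mathcal{H}_T}=P^{\bold{Y}}$, and $\mathcal{F}_T$, $\mathcal{H}_T$ are $Q$-independent. Under $Q$, $\bold{X}$ is a $(Q,\mathbb{F})$-martingale enjoying the $(Q,\mathbb{F})$-p.r.p.\ and $\bold{Y}$ is a $(Q,\mathbb{H})$-martingale enjoying the $(Q,\mathbb{H})$-p.r.p., and the $Q$-independence produces the $(Q,\mathbb{G})$-strong orthogonality of $X^i-X^i_0$ and $Y^j-Y^j_0$. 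Hence \textbf{A1)}–\textbf{A3)} hold with $(P,\bold{M},\bold{N})$ replaced by $(Q,\bold{X}-\bold{X}_0,\bold{Y}-\bold{Y}_0)$, and applying (i3) in this new setting yields the announced representation. I expect the main obstacle to be the computation in step two of (i3), namely verifying the strong orthogonality of the covariation terms $[M^i,N^j]$ with the original components $M^h$ and $N^k$; the bound on the jumps from Corollary \ref{cor-boundedness-jumps} and a careful application of the integration-by-parts formula should be decisive.
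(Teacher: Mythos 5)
Your overall architecture matches the paper's: independence of $\mathcal{F}_T$ and $\mathcal{H}_T$ from the p.r.p.\ of $\bold{M}$, $\bold{N}$ plus \textbf{A3)} and Lemma \ref{lemma_orth stable spaces} i); the p.r.p.\ of the triple $(\bold{M},\bold{N},\bold{[M,N]^V})$ via uniqueness of the equivalent martingale measure tested on the $\pi$-system $\{A\cap B\}$; the direct-sum decomposition via Lemma \ref{lemma_orth stable spaces} ii); and the decoupling measure $dQ=L^{\bold{X}}L^{\bold{Y}}\,dP$ for i4). However, the step you yourself single out as the main obstacle --- the $(P,\mathbb{G})$-strong orthogonality of $[M^i,N^j]$ with $M^h$ and $N^k$ --- is where your proposed mechanism does not work. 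Corollary \ref{cor-boundedness-jumps} is irrelevant here: it concerns the jumps of the Girsanov density of $P^{\bold{X}}$ (the bound $\boldsymbol{\hat{\lambda}}^{tr}\Delta\bold{M}<1$) and the minimal martingale measure, and gives no control on $\Delta M^h\,\Delta M^i\,\Delta N^j$. Moreover \textbf{A3)} alone (pairwise orthogonality of $M^i$ and $N^j$) does not imply orthogonality of $M^h$ with the product process $[M^i,N^j]$, and the brackets do \emph{not} ``reduce to zero'': $[M^h,[M^i,N^j]]_t=\sum_{s\le t}\Delta M^h_s\Delta M^i_s\Delta N^j_s$ is in general a nonzero process (if it and $[M^i,N^j]$ itself were null, the covariation terms would not be needed in the basis at all). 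The missing ingredient is precisely the independence established in i1): it forces $\langle M^{c,i},N^{c,j}\rangle^{P,\mathbb{G}}=0$, hence $[M^i,N^j]_t=\sum_{s\le t}\Delta M^i_s\Delta N^j_s$, and then the conditional expectation of $\sum_{u<s\le t}\Delta M^h_s\Delta M^i_s\Delta N^j_s$ given $\mathcal{G}_u$ factorizes into an $\mathcal{F}_u$-conditional term times $E^P[\Delta N^j_s\mid\mathcal{H}_u]=0$, which is what makes $[M^h,[M^i,N^j]]$ a (generally nonzero) uniformly integrable $(P,\mathbb{G})$-martingale.

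A second, smaller error: in your uniqueness-of-e.m.m.\ argument you claim a competitor $\widetilde{P}$ must agree with $P^{\bold{X}}$ on $\mathcal{F}_T$ and conclude $\widetilde{P}=P^{\bold{X}}\otimes P^{\bold{Y}}=P$. This is wrong as written, since $P^{\bold{X}}\ne P|_{\mathcal{F}_T}$ whenever $\bold{A}\ne 0$. The correct statement is that $\widetilde{P}$ makes $\bold{M}$ (not $\bold{X}$) an $\mathbb{F}$-martingale, and since $\bold{M}$ enjoys the $(P,\mathbb{F})$-p.r.p.\ by Proposition \ref{prop-pred-mart-rep}, its unique equivalent martingale measure on $\mathcal{F}_T$ is $P|_{\mathcal{F}_T}$ itself; hence $\widetilde{P}|_{\mathcal{F}_T}=P|_{\mathcal{F}_T}$, and symmetrically on $\mathcal{H}_T$, after which the evaluation on the $\pi$-system (using that $\boldsymbol{\xi}^A\bullet\bold{M}\cdot\boldsymbol{\xi}^B\bullet\bold{N}$ is a centered $\widetilde{P}$-martingale because $[M^i,N^j]$ is a $\widetilde{P}$-martingale by definition of the competitor) gives $\widetilde{P}=P$ on $\mathcal{G}_T$.
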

\begin{proof}
\begin{itemize}
\item [i1)] This statement is the extension to the multidimensional case of Lemma 4.2 in \cite{caltor15}.~Thanks to point i) of Lemma \ref{lemma_orth stable spaces} its
 proof is exactly the same.~For the sake of completeness we repeat it here.\bigskip\\
Proposition \ref{prop-pred-mart-rep} implies that if $A\in \mathcal{F}_T$ and $B\in \mathcal{H}_T$ then
 \begin{align}\label{rappA}
\mathbb{I}_A=P(A)+(\boldsymbol{\xi}^A\bullet \bold{M})_T,\;\;\;\;
\mathbb{I}_B=P(B)+(\boldsymbol{\xi}^B\bullet \bold{N})_T,\;\;\;\; P\textrm{-a.s.\;}
\end{align}
for $\boldsymbol{\xi}^A$ and $\boldsymbol{\xi}^B$  in
$\mathcal{L}^2(\bold{M},P,\mathbb{F})$ and
$\mathcal{L}^2(\bold{N},P,\mathbb{H})$ respectively.~These
equalities imply that $P(A\cap B)$ differs from $P(A)P(B)$ by the
expression
\begin{align*}
P(B)E^P\left[(\boldsymbol{\xi}^A\bullet \bold{M})_T\right]+P(A)E^P\left[(\boldsymbol{\xi}^B\bullet \bold{N})_T\right]+
E^P\left[(\boldsymbol{\xi}^A \bullet \bold{M})_T\,\cdot (\boldsymbol{\xi}^B\bullet \bold{N})_T\right].
\end{align*}
The above expression is null.~In fact the
$(P,\mathbb{G})$-martingale property of $\bold{M}$ and $\bold{N}$ and the
integrability of the integrands $\boldsymbol{\xi}^A$ and $\boldsymbol{\xi}^B$ imply that
the processes $\boldsymbol{\xi}^A\bullet \bold{M}$ and
$\boldsymbol{\xi}^B\bullet \bold{N}$ are real centered
 martingales.~Moreover, thanks to the assumption \textbf{A3)}, by suitably applying point i) of Lemma \ref{lemma_orth stable spaces},
 we get that the product $\boldsymbol{\xi}^A\bullet \bold{M}\cdot \boldsymbol{\xi}^B\bullet \bold{N}$ is a centered real
$(P,\mathbb{G})$-martingale.
\item [i2)] This is a direct consequence of the previous point and Lemma 2.2 in \cite{ame-be-schw03}.
\item [i3)] The proof of this point will
be done in three steps: \\
(a) the first goal is to prove the $(P,\mathbb{G})$-p.r.p.~for
$(\bold{M}, \bold{N}, \bold{[M,N]^V})$; \\
(b) as a second step the following key result is proved: fixed $i\in (1,\ldots, m)$ and $j\in (1,\ldots, n)$
the martingale $[M^i,N^j]$
is $(P,\mathbb{G})$-strongly orthogonal to $M^l$ and to
$N^h$ for arbitrary $l\in (1,\ldots, m)$ and $h\in (1,\ldots, n)$;\\
(c) finally point (a) and point (b) together with the second part
of Lemma \ref{lemma_orth stable spaces} allow to derive the
result.
\bigskip\\
(a) The $(P,\mathbb{G})$-p.r.p.~for $(\bold{M},\bold{N},\bold{[M,N]^V})$ is achieved
by proving that
$$\mathbb{P}((\bold{M},\bold{N},\bold{[M,N]^V}),\mathbb{G})=\{P\},$$ or, equivalently, that
for any $R\in\mathbb{P}((\bold{M},\bold{N},\bold{[M,N]^V}),\mathbb{G})$,  $P$ and $R$
coincide on the $\pi$-system $$\{A\cap B, \ A\in\mathcal{F}_{T}, \
B\in\mathcal{H}_{T}\},$$ which generates $\mathcal{G}_T$.~To this
end, note that the equalities in (\ref{rappA}) hold under $R$ so that
$R(A\cap B)$ differs from $P(A)P(B)$ by the expression
\begin{align}
P(B)E^R\left[(\boldsymbol{\xi}^A\bullet \bold{M})_T\right]+P(A)E^R\left[(\boldsymbol{\xi}^B\bullet \bold{N})_T\right]+
E^R\left[(\boldsymbol{\xi}^A\bullet \bold{M})_T\cdot(\boldsymbol{\xi}^B\bullet \bold{N})_T\right].
\end{align}
The above expression is null.~In fact \textbf{A1)} implies
$R|_{\mathcal{F}_T}=P|_{\mathcal{F}_T}$ and
$R|_{\mathcal{H}_T}=P|_{\mathcal{H}_T}$ and togheter with i1) this
in turn implies that $\boldsymbol{\xi}^A\bullet \bold{M}$ and
$\boldsymbol{\xi}^B\bullet \bold{N}$ are centered
$(R,\mathbb{G})$-martingales.~Moreover, by definition of $R$, for
all $i\in (1,\ldots, m)$ and $j\in (1,\ldots, n)$ the process
$[M^i,N^j]$ is a $(R,\mathbb{G})$-martingale so that by point i)
of Lemma \ref{lemma_orth stable spaces} the product
$\boldsymbol{\xi}^A\bullet \bold{M}\cdot \boldsymbol{\xi}^B\bullet
\bold{N}$
is a centered real $(R,\mathbb{G})$-martingale.\bigskip\\
%
(b) $[M^i,N^j]$ is $(P,\mathbb{G})$-strongly orthogonal to the
$(P,\mathbb{G})$-martingales $M^l$ and $N^h$, if and only if
$\left[M^l,[M^i,N^j]\right]$ and $\left[N^h,[M^i,N^j]\right]$ are uniformly
integrable $(P,\mathbb{G})$-martingales.\\Recall that
\begin{equation}\label{quadratic_covariation}
[M^i,N^j]_t=\langle M^{c,i},N^{c,j}\rangle^{P,\mathbb{G}}_t+\sum_{s\le t}\Delta M^i_s\Delta
N^j_s,\;\;\;\; P\textrm{-a.s.\;}
\end{equation}
where $M^{c,i}$ and $N^{c,j}$ are the $i$-component of the continuous martingale part of $\bold{M}$
and the $j$-component of the continuous martingale part of $\bold{N}$ respectively.~By point i1) $M^{c,i}$ and $N^{c,j}$
are independent $(P,\mathbb{G})$-martingales so that $\langle
M^{c,i},N^{c,j}\rangle^{P,\mathbb{G}}\equiv 0$, since by definition  $\langle
M^{c,i},N^{c,j}\rangle^{P,\mathbb{G}}$ is the unique $\mathbb{G}$-predictable process
with finite variation such that $M^{c,i}\,N^{c,j}-\langle M^{c,i},N^{c,j}\rangle^{P,\mathbb{G}}$
is $(P,\mathbb{G})$-local martingale equal to 0 at time 0 (see
Subsection 9.3.2. in \cite{Jean-yor-chesney}).~Therefore
\begin{equation}\label{eq-covar}
[M^{i},N^{j}]_t=\sum_{s\le t}\Delta M^i_s\Delta N^j_s.
\end{equation}
As a consequence
$$
\left[M^l,[M^i,N^j]\right]_t=\sum_{s\le t}\Delta M^l_s\Delta M^i_s\Delta N^j_s.
$$
Then for $u\le t$ one has
\begin{align*}
&E^P\left[\left[M^l,[M^i,N^j]\right]_t\mid \mathcal{G}_u\right]\\&=E^P\left[\sum_{s\le
u}\Delta M^l_s\Delta M_s^i\Delta N^j_s\mid\mathcal{G}_u\right]+E^P\left[\sum_{u<s\le t}\Delta M^l_s\Delta M_s^i\Delta N^j_s\mid\mathcal{G}_u\right]\\
&=\left[M^l,[M^i,N^j]\right]_u+\sum_{u<s\le t}E^P\left[\Delta M^l_s\Delta M_s^i\Delta N^j_s\mid\mathcal{G}_u\right]\\
&=\left[M^l,[M^i,N^j]\right]_u+\sum_{u<s\le t}E^P\left[\Delta M^l_s\Delta M_s^i\mid\mathcal{F}_u\right]E^P\left[\Delta
N^j_s\mid\mathcal{H}_u\right],
\end{align*}
where the last equality follows by point i1) and Lemma 4.3 in
\cite {caltor15}.~The  $(P,\mathbb{G})$-martingale property for
$[M^l,[M^i,N^j]]$ follows by observing that $E^P\left[\Delta
N^j_s|\mathcal{H}_u\right]=0$, for any $s>u$.~Finally
$\left[M^l,[M^i,N^j]\right]$ is uniformly integrable, since it is a $(P,\mathbb{G})$-regular martingale.\\
Analogously one gets that $[M^i,N^j]$ is $(P,\mathbb{G})$-strongly
orthogonal to $N^h$.\bigskip\\
(c) Point (a) implies that for every
$W\in\mathcal{M}^2(P,\mathbb{G})$ there exists a process
$\boldsymbol{\Theta}^W$ in $\mathcal{L}^2((\bold{M}, \bold{N},
\bold{[M,N]^V}), P, \mathbb{G})$ such that
$$W_t=W_0 + \left(\boldsymbol{\Theta}^W\bullet (\bold{M}, \bold{N}, \bold{[M,N]^V})\right)_t.$$
Taking into account point b), we then apply point ii) of Lemma
\ref{lemma_orth stable spaces} with $\mu=\bold{M},
\mu^\prime=\bold{N}, \mu^{\prime\prime}=\bold{[M,N]^V}$ and we
obtain immediately the thesis.
\item [i4)] Define
 $Q$ on $(\Omega,\mathcal{G}_T)$ by
$$\frac{dQ}{dP}:=L^{\bold{X}}\cdot L^{\bold{Y}}$$
where
$$L^{\bold{X}}:=\frac{dP^{\bold{X}}}{dP|_{\mathcal{F}_T}},\;\;\;\;\;\;L^Y:=\frac{dP^{\bold{Y}}}{dP|_{\mathcal{H}_T}}.$$
The definition is well-posed since by point i1) $L^{\bold{X}}\cdot L^{\bold{Y}}$ is
in $L^1(\Omega, P,\mathcal{G}_T)$.~$L^{\bold{X}}$ and $L^{\bold{Y}}$ are strictly
positive and therefore  $Q$ and $P|_{\mathcal{G}_T}$ are
equivalent measures.~Moreover for all $A$ in $\mathcal{F}_T$ and
$B$ in $\mathcal{H}_T$ it holds
$$
    Q(A\cap B)=E^P[\mathbb{I}_A\,L^{\bold{X}}]\,E^P[\mathbb{I}_B\,
    L^{\bold{Y}}],
$$
since $\mathcal{F}_T$ and $\mathcal{H}_T$ are independent under
$P$.~Using the equalities $E^P[L^{\bold{X}}]$ =1=
$E^P[L^{\bold{Y}}]$ one immediately gets the $Q$-independence of $\mathcal{F}_T$ and $\mathcal{H}_T$.\\
Finally $\bold{X}$ is a $(Q,\mathbb{F})$-martingale since
$Q|_{\mathbb{F}}=P^{\bold{X}}$ and it is also a $(Q,\mathbb{G})$-martingale
 by the $Q$-independence of $\mathbb{F}$ and $\mathbb{H}$.~Analogously it can be shown that  $\bold{Y}$ is a
$(Q,\mathbb{G})$-martingale.~Moreover the $Q$-independence of
$\mathcal{F}_T$ and $\mathcal{H}_T$ implies the analogous of point
(b) for $\bold{X}$ and $\bold{Y}$, that is the
$(Q,\mathbb{G})$-strong orthogonality of $X^i$ and $Y^j$, for all
$i\in (1,\ldots, m)$ and $j\in (1,\ldots, n)$.~The representation
i4) then follows by using the same procedure as in point i3).
\end{itemize}
\end{proof}
\section{Two applications}\label{sec:applications}

In this section we discuss two applications of Theorem \ref{th-main}.\bigskip\\
The first application is the extension of the representation
property i3) of Theorem \ref{th-main} to the case
$$\mathbb{G}:=\mathbb{F}^1\vee\ldots \vee\mathbb{F}^d$$
where, for all $i=1,\ldots ,d$, $\mathbb{F}^i\subset\mathcal{F}$ is
the reference filtration on $(\Omega, \mathcal{F}, P)$ of a real square integrable martingale
$M^i$ enjoying the $(P,\mathbb{F}^i)$-p.r.p.\bigskip\\
The second application proposes a  martingale representation
result closed to that given in the second part of Proposition 5.3
in \cite{ca-jean-za13}.~The statement dealt with the
representation under the historical measure  $P$ of every
square-integrable martingale of a market with default time $\tau$
when the available information was completed by the observation of
the default occurrence.~Here we work under very similar
hypotheses, but we also assume the \textit{immersion property} of
the filtration of the market $\mathbb{F}$ into the filtration
$\mathbb{G}=\bigcap_{s
> \cdot}\mathcal{F}_s\vee\sigma(\tau\wedge s)$, that is every
$(P,\mathbb{F})$-square-integrable martingale is a
$(P,\mathbb{G})$-square-integrable martingale too.

\subsection{The case of the union of a finite number of filtrations}\label{sec:componentwise}

First of all we prove a martingale representation result for a
reference filtration which is the union of just three
filtrations.~Then we extend it to the case of a reference
filtration which is the union of any finite number of filtrations.

\begin{thm}\label{thm-three filtrations}
Let  $\mathbb{F}^1, \mathbb{F}^2, \mathbb{F}^3$ be  three
filtrations on the space $(\Omega, \mathcal{F}, P)$.~For $i=1, 2,
3$, let $M^i$,
 be a real square integrable
$(P,\mathbb{F}^i)$-martingale.~Assume that
\begin{itemize}
\item [\textbf{B1)}] $\mathbb{P}(M^i,\mathbb{F}^i)=\{P_{|\mathcal{F}^i_T}\},\ \ i=1, 2, 3 \ $;
\item [\textbf{B2)}] for all pair (i,j) with $i, j\in\{1, 2, 3\}$, $M^i$ and $M^j$ are
$(P,\mathbb{G})$-strongly orthogonal martingales where
$$\mathbb{G}:=\mathbb{F}^1\vee\mathbb{F}^2\vee\mathbb{F}^3\ ;$$
\item [\textbf{B3)}]  $M^3$ is
$(P,\mathbb{G})$-strongly orthogonal to the $(P,\mathbb{G})$-martingale $[M^1, M^2]$.
\end{itemize}
Then
\begin{itemize}
\item [j1)] $\mathcal{F}^1_T, \mathcal{F}^2_T, \mathcal{F}^3_T$ are $P$-independent $\sigma$-algebras;
\item [j2)] $\mathbb{G}$ fulfills the standard hypotheses;
\item [j3)]
every $W$ in $\mathcal{M}^2(P,\mathbb{G})$ can be uniquely
represented $P$-a.s.~as
\begin{align*}
W_t=W_0+ \sum_{i=1}^3\int_0^t\Upsilon^{W, i}_s\,dM^i_s + \sum_{i,j
\in (1,2,3), i<j}\int_0^t\Phi^{W,i,j}_s\,d[M^i, M^j])_s+
\int_0^t\Psi^W_s\,d[[M^1,M^2],M^3]_s
\end{align*}
with $\Upsilon^{W,i}$ in $\mathcal{L}^2(M^i, P, \mathbb{G})$, $\Phi^{W,i,j}$
in $\mathcal{L}^2([M^i,M^j], P, \mathbb{G})$ and $\Psi^W$ in
$\mathcal{L}^2([[M^1,M^2],M^3], P, \mathbb{G})$.\\In particular the family
$$\left(M^1, M^2, M^3, [M^1,M^2], [M^1,M^3], [M^2,M^3], [[M^1,M^2],M^3]\right)$$ is a
$(P,\mathbb{G})$-basis of real strongly orthogonal martingales.
\end{itemize}
\end{thm}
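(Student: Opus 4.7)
I would prove Theorem \ref{thm-three filtrations} by applying the bivariate representation theorem (Theorem \ref{th-main}) iteratively. First, apply it with $\bold{X}=M^1$ on $\mathbb{F}^1$ and $\bold{Y}=M^2$ on $\mathbb{F}^2$: hypothesis \textbf{A1)} of Theorem \ref{th-main} is \textbf{B1)} for $i=1,2$; \textbf{A2)} is trivial because $M^1$ and $M^2$ are already $P$-martingales (so their Radon--Nikodym densities equal $1$); and \textbf{A3)} is precisely \textbf{B2)} for the pair $(1,2)$. The output is that $\mathcal{F}^1_T$ and $\mathcal{F}^2_T$ are $P$-independent, that $\mathbb{F}^1\vee\mathbb{F}^2$ satisfies the standard hypotheses, and that the triplet $\bold{X}':=(M^1,M^2,[M^1,M^2])$ enjoys the $(P,\mathbb{F}^1\vee\mathbb{F}^2)$-p.r.p.\ with pairwise $(P,\mathbb{F}^1\vee\mathbb{F}^2)$-strongly orthogonal components (the orthogonality of $[M^1,M^2]$ against $M^1$ and $M^2$ being exactly what step (b) of the proof of Theorem \ref{th-main} establishes).

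\textbf{Second iteration.} Next, apply Theorem \ref{th-main} again with $\bold{X}=\bold{X}'$ on $\mathbb{F}^1\vee\mathbb{F}^2$ and $\bold{Y}=M^3$ on $\mathbb{F}^3$. Here \textbf{A1)} for $\bold{X}'$ is the p.r.p.\ just obtained in step 1, \textbf{A1)} for $M^3$ is \textbf{B1)} with $i=3$, \textbf{A2)} is again automatic, and \textbf{A3)} --- the $(P,\mathbb{G})$-strong orthogonality of each of $M^1,M^2,[M^1,M^2]$ with $M^3$ --- is delivered by \textbf{B2)} for the pairs $(1,3)$ and $(2,3)$ and by \textbf{B3)} for the pair $([M^1,M^2],M^3)$. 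The theorem then yields: (a) $\mathcal{F}^1_T\vee\mathcal{F}^2_T$ is $P$-independent of $\mathcal{F}^3_T$, which together with step 1 gives mutual independence of $\mathcal{F}^1_T,\mathcal{F}^2_T,\mathcal{F}^3_T$, hence j1); (b) $\mathbb{G}$ satisfies the standard hypotheses, i.e.\ j2); and (c) every $W\in\mathcal{M}^2(P,\mathbb{G})$ decomposes uniquely as a sum of vector integrals with respect to $\bold{X}'$, $M^3$ and $[\bold{X}',M^3]^V=([M^1,M^3],[M^2,M^3],[[M^1,M^2],M^3])$.

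\textbf{Componentwise form, strong orthogonality, and main obstacle.} To obtain j3) in the form stated it remains to refine these three vector integrals into seven componentwise integrals and to certify the pairwise $(P,\mathbb{G})$-strong orthogonality of the seven integrators. Mutual independence from j1) makes the continuous parts $M^{c,1},M^{c,2},M^{c,3}$ mutually $(P,\mathbb{G})$-independent, so every sharp covariation $\langle M^{c,i},M^{c,j}\rangle^{P,\mathbb{G}}$ vanishes and (\ref{quadratic_covariation}) reduces each $[M^i,M^j]$ to $\sum_{s}\Delta M^i_s\Delta M^j_s$. The pairwise strong orthogonality of any two of the seven processes then follows by the same conditional-expectation factorization used in step (b) of the proof of Theorem \ref{th-main} (Lemma 4.3 of \cite{caltor15}): the relevant bracket is a sum of products of jumps that separates into $\mathcal{F}^i_T$-measurable factors for distinct indices $i\in\{1,2,3\}$, and at least one such factor has vanishing $\mathcal{F}^i_u$-conditional expectation for $s>u$. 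Once this is in place, Remark \ref{rem_orth stable spaces} together with its multi-family extension Remark \ref{rem_estensione_mult} identifies $\mathcal{M}^2(P,\mathbb{G})$ with the direct sum of the seven one-dimensional stable spaces, yielding both the existence and the uniqueness in j3). The main technical point is checking that $\bold{X}'$ really fits the framework of Theorem \ref{th-main} at the second iteration --- namely the integrability condition (\ref{eq-int-cond-X}) for $[M^1,M^2]$, which is handled via the jump-product representation and Cauchy--Schwarz on $[M^1]$ and $[M^2]$, and the uniqueness of the EMM for $\bold{X}'$ on $\mathcal{F}^1_T\vee\mathcal{F}^2_T$, which is just a restatement of the p.r.p.\ obtained in step 1.
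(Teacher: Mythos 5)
Your proposal is correct, and it organizes the argument differently from the paper. The paper does not iterate Theorem \ref{th-main}: it reproves everything directly for the seven--dimensional vector. For j1) it expands $\mathbb{I}_{A^1}\mathbb{I}_{A^2}\mathbb{I}_{A^3}$ via \textbf{B1)}, and kills the triple--product term by noting (this is the only place it invokes Theorem \ref{th-main}, with $m=n=1$) that $\int\xi^1dM^1\cdot\int\xi^2dM^2\in\mathcal{Z}^2(M^1,M^2,[M^1,M^2])$, a space orthogonal to $\mathcal{Z}^2(M^3)$ by \textbf{B2)}--\textbf{B3)}; for j3) it establishes pairwise strong orthogonality of the seven integrators by the jump--product computation and then proves the p.r.p.\ of the seven--vector from scratch, by showing $P$ is the unique equivalent martingale measure for it on the $\pi$-system of triple intersections. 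You instead treat Theorem \ref{th-main} as a black box and apply it twice, taking $\bold{X}'=(M^1,M^2,[M^1,M^2])$ as the input semi-martingale of the second pass; this is more modular and avoids re-running the EMM-uniqueness argument, at the price of having to check that $\bold{X}'$ satisfies the standing hypotheses of Section 2. Those checks all go through: \textbf{A1)} for $\bold{X}'$ is the p.r.p.\ from the first pass (via the equivalence recalled after (\ref{natural-completeness-2}), with $\mathcal{F}^1_0\vee\mathcal{F}^2_0$ trivial), \textbf{A2)} is vacuous since all densities equal $1$, the standard hypotheses for $\mathbb{F}^1\vee\mathbb{F}^2$ are the conclusion i2) of the first pass, and the square-integrability of $[M^1,M^2]$ follows from $[[M^1,M^2]]_T\le[M^1]_T[M^2]_T$ \emph{together with} the $P$-independence of $\mathcal{F}^1_T$ and $\mathcal{F}^2_T$ already delivered by the first pass (Cauchy--Schwarz alone would require fourth moments of the jumps, so do make the appeal to independence explicit there). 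Your final refinement of the three block integrals into seven componentwise integrals, via pairwise strong orthogonality and Remarks \ref{rem_orth stable spaces} and \ref{rem_estensione_mult}, matches the orthogonality computation the paper performs in j3), so the two proofs converge at that stage.
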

\begin{proof}
\begin{itemize}
\item [j1)]
In order to show that $\mathcal{F}^1_T, \mathcal{F}^2_T, \mathcal{F}^3_T$ are $P$-independent we
observe that for any choice of $A^1\in \mathcal{F}^1_T$, $A^2\in \mathcal{F}^2_T$ and $A^3\in \mathcal{F}^3_T$ the value $P(A^1\cap A^2\cap A^3)$
 differs from $P(A^1)P(A^2)P(A^3)$ by the expectation under $P$ of the expression
\begin{align}\label{expression}
&\sum_i\,k_i\,\int_0^T\xi^i_s\,dM^i_s+\sum_{i,j\in (1,2,3),
i<j}\,k_{ij}\,\int_0^T\xi^{i}_s\,dM^i_s\cdot
\,\int_0^T\xi^{j}_s\,dM^j_s\cr &+
\,\int_0^T\xi^{1}_s\,dM^1_s\cdot\,
\,\int_0^T\xi^{2}_s\,dM^2_s\cdot \,\int_0^T\xi^{3}_s\,dM^3_s
\end{align}
where, for $i=1,2,3$, $\xi^i\in\mathcal{L}^2(M^i,P,\mathbb{F}^i)$ is the process, whose existence follows by assumption \textbf{B1)}, such that
     \begin{align}\label{rappAmult}
\mathbb{I}_{A^i}=P(A^i)+\,\int_0^T\xi^i_s\,dM^i_s,\;\;\;P\textrm{-a.s.\;}
\end{align}
and $k_i$, for $i=1,2,3$, and $k_{ij}$, for $i,j\in (1,2,3), i<j$, are suitable constants.\bigskip\\
But actually the expectation under $P$ of (\ref{expression}) is null.\\
In fact, first of all, for all $i=1,2,3$, the process $\int_0^{\cdot} \xi^{i}_s\,dM^i_s$ is an element of $\mathcal{Z}^2(M^i)$ and therefore it is a centered $(P,\mathbb{F}^i)$-martingale, so that
$$E^P\Big[\sum_i\,\int_0^T\xi^i_s\,dM^i_s\Big]=0.$$
Moreover assumption \textbf{B2)} joint with the first part of
Lemma \ref{lemma_orth stable spaces} provides  the product of
$\int_0^{\cdot} \xi^{i}_s\,dM^i_s \cdot \,\int_0^{\cdot}
\xi^{j}_s\,dM^j_s$ to be  a centered real
$(P,\mathbb{F}^i\vee\mathbb{F}^j)$-martingale for all $i,j\in
(1,2,3)$ with $i\neq j$, so that
$$E^P\Big[\sum_{i,j\in (1,2,3), i<j}\,\int_0^T\xi^{i}_s\,dM^i_s\,\,\cdot \int_0^T\xi^{j}_s\,dM^j_s\Big]=0.$$
Finally, by Theorem \ref{th-main} with $m=n=1$ and $X=M^1$,
$Y=M^2$, we derive that
$(P,\mathbb{F}^1\vee\mathbb{F}^2)$-martingale $\int_0^{\cdot}
\xi^{1}_s\,dM^1_s \cdot \,\int_0^{\cdot} \xi^{2}_s\,dM^2_s$
belongs to $\mathcal{Z}^2(M^1, M^2, [M^1, M^2])$.~From assumptions
\textbf{B2)} and \textbf{B3)} the martingale $M^3$ is
$(P,\mathbb{G})$-strongly orthogonal to $M^1, M^2$ and $[M^1,
M^2]$ so that any element of  $\mathcal{Z}^2(M^3)$ is
$(P,\mathbb{G})$-strongly orthogonal to all elements of
$\mathcal{Z}^2(M^1, M^2, [M^1, M^2])$ (see point a) of Theorem
4.7, page 116 in \cite{jacod}).~As a consequence $\int_0^{\cdot}
\xi^{1}_s\,dM^1_s \cdot\int_0^{\cdot} \xi^{2}_s\,dM^2_s \cdot
\,\int_0^{\cdot} \xi^{3}_s\,dM^3_s$ is a centered real
$(P,\mathbb{G})$-martingale so that
$$E^P\Big[\int_0^{T} \xi^{1}_s\,dM^1_s \cdot\int_0^{T} \xi^{2}_s\,dM^2_s \cdot \,\int_0^{T} \xi^{3}_s\,dM^3_s\Big]=0.$$
\item [j2)] This fact is a direct consequence of the previous point and Lemma 2.2 in \cite{ame-be-schw03}.
\item [j3)]
From  the assumptions \textbf{B2)} and \textbf{B3)} it follows immediately that the processes
$$ M^1, M^2, M^3, [M^1,M^2], [M^1,M^3], [M^2,M^3], [[M^1,M^2],M^3]$$
form a family of $(P,\mathbb{G})$-martingales.~It is easy to prove
that these martingales are pairwise $(P,\mathbb{G})$-strongly
orthogonal.~In fact  using j1) and  as in point (b) of the proof
of
 part i3) of Theorem \ref{th-main} we get that, fixed $i,j\in (1,2,3)$, $[M^i,M^j]$ coincides
 $P\textrm{-a.s.\;}$ with $\sum_{s\le t}\Delta M^i_s\Delta M^j_s$ and in particular $[M^i,M^j]$
  has no continuous martingale part.~As a consequence we derive that $\left[M^l,[M^i,M^j]\right]$, $l=1,2,3$,
   coincides with $\sum_{s\le t}\Delta M^l_s\Delta M^i_s\Delta M^j_s$ and, again using j1), we get that is a $(P,\mathbb{G})$-martingale,
that is $M^l$ is $(P,\mathbb{G})$-strongly orthogonal to $[M^i,M^j]$.~Analogously since
$\left[M^1,[M^2,M^3]\right]$ has no continuous martingale part, point j1) allows to prove that it is $(P,\mathbb{G})$-strongly
orthogonal to $M^k$ , for all $k=1,2,3$ and to
$[M^i,M^j]$, for all $i,j\in (1,2,3)$.\bigskip\\
From Theorem 36 in \cite{prott} it follows that
\begin{align*}&\mathcal{Z}^2\left( M^1, M^2, M^3, [M^1,M^2], [M^1,M^3], [M^2,M^3], [[M^1,M^2],M^3]\right)=\cr &
 \oplus_{i=1}^3\mathcal{Z}^2(M^i)\oplus_{i,j\in (1,2,3), i<j}\mathcal{Z}^2([M^i, M^j])\oplus\mathcal{Z}^2([[M^1, M^2],M^3])\end{align*}
 where the symbol of direct sum refers to the uniqueness of the representation of any martingale in $\mathcal{M}^2(P,\mathbb{G})$
 and to the orthogonality of the considered stable spaces, that is to the $(P,\mathbb{G})$-strong orthogonality of all pair of their elements.\bigskip\\
 We now show that
the vector martingale $$(M^1, M^2, M^3, [M^1,M^2], [M^1,M^3], [M^2,M^3], [[M^1,M^2],M^3])$$ enjoys the $(P,\mathbb{G})$-p.r.p..\bigskip\\
This shall imply that
$$
\mathcal{M}^2(P,\mathbb{G})=\oplus_{i\in (1,2,3)}\mathcal{Z}^2(M^i)\oplus_{i,j\in (1,2,3), i<j}\mathcal{Z}^2([M^i, M^j])\oplus\mathcal{Z}^2([[M^1, M^2],M^3])
$$
that is the thesis.\bigskip\\
Indeed we prove that
$$\mathbb{P}(M^1, M^2, M^3, [M^1,M^2], [M^1,M^3], [M^2,M^3], [[M^1,M^2],M^3]),\mathbb{G})=\{P\}.$$
In fact, if  $R\in\mathbb{P}(M^1, M^2, M^3, [M^1,M^2], [M^1,M^3],
[M^2,M^3], [[M^1,M^2],M^3]),\mathbb{G})$.\\Then  $P$ and $R$
coincide on the $\pi$-system $$\{A^1\cap A^2 \cap A^3, \
A^i\in\mathcal{F}^i_{T},\, i=1,2,3\},$$ which generates
$\mathcal{G}_T$.~To this end, note that the equalities in
(\ref{rappAmult}) hold under $R$-a.s.~so that $R(A^1\cap A^2 \cap
A^3)$ differs from $P(A^1)P(A^2)P(A^3)$ by the expectation under
$R$ of the expression (\ref{expression}).~The last turns out to be
null by the same arguments used in the proof of point j1) since
assumption \textbf{B1)} implies
$R|_{\mathcal{F}^i_T}=P|_{\mathcal{F}^i_T},\,i=1,2,3$.
\end{itemize}
\end{proof}
We now extend the theorem to the case of a general finite number $n>3$ of martingales.
\begin{thm}
Let $\mathbb{F}^1, \ldots, \mathbb{F}^d$ be filtrations on the
space  $(\Omega, \mathcal{F}, P)$. Let $M^i$ be a real square
integrable $(P,\mathbb{F}^i)$-martingale, for $i=1, \ldots,
d$.~Assume that
\begin{itemize}
\item [\textbf{C1)}] $\mathbb{P}(M^i,\mathbb{F}^i)=\{P_{|\mathcal{F}^i_T}\},\ i=1,\ldots, d$;
\item [\textbf{C2)}] for all $k\in (2,\ldots, d)$, for all set of index $(i_1,i_2,\ldots, i_k)$ with $i_1<i_2<\ldots i_k$,
$$[[[M^{i_1},M^{i_2}],M^{i_3}], \ldots M^{i_k}]$$ is a $(P,\mathbb{G})$-martingale where
$$\mathbb{G}:=\mathbb{F}^1\vee\mathbb{F}^2\ldots\vee\mathbb{F}^d.$$
\end{itemize}
Then the family obtained as the union of these sets of martingales\\
$M_i,\,i\in (1,2\ldots d)$\\
$[M^i,M^j],\,i,j\in (1,\ldots, d),\,i<j$,\\
$[[M^i,M^j],M^k],\,i,j,k\in (1,\ldots, d),\,i<j<k$,\\
$[[[M^i,M^j],M^k],M^l],\,i,j,k,l\in (1,\ldots, d),\,i<j<k<l$,\\
$ \ldots$,\\
$[[[[M^{1},M^{2}],M^3],M^4]\ldots,M^{d}]$\vspace{1em}\\
is a $(P,\mathbb{G})$-basis of real strongly orthogonal
martingales or equivalently
\begin{align*}
\mathcal{M}^2(P,\mathbb{G})=&\oplus_{i\in (1,2\ldots d)}\mathcal{Z}^2(M^i)\cr &\oplus_{i,j\in (1,2\ldots d), i<j}\mathcal{Z}^2([M^i, M^j])\cr
&\oplus_{i,j,k\in (1,2\ldots d), i<j<k}\mathcal{Z}^2([[M^i, M^j],M^k])\cr &\oplus_{i,j,k,l\in (1,2\ldots d), i<j<k<l}\mathcal{Z}^2([[[M^i, M^j],M^k],M^ l])\cr &\ldots\cr &\oplus\mathcal{Z}^2([[[M^1, M^2],M^3],\ldots,M^d]).
\end{align*}
\end{thm}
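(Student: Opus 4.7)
The plan is to argue by induction on $d$. The base case $d=3$ is precisely Theorem \ref{thm-three filtrations}. So let $d\geq 4$ and assume the statement holds for $d-1$ filtrations. Set $\mathbb{G}':=\mathbb{F}^1\vee\cdots\vee\mathbb{F}^{d-1}$. I would first check that the inductive hypothesis applies to $(M^1,\ldots, M^{d-1})$ in $\mathbb{G}'$: C1) is inherited verbatim for $i\leq d-1$, while C2) is inherited too, because an iterated bracket $B$ built from $M^{j_1},\ldots, M^{j_k}$ with $j_k\leq d-1$ is $\mathbb{G}'$-adapted, so if $B$ is a $(P,\mathbb{G})$-martingale then conditioning on $\mathcal{G}'_t$ shows $B$ is also a $(P,\mathbb{G}')$-martingale.

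Applying the inductive hypothesis, we obtain a vector $(P,\mathbb{G}')$-martingale $\bold{X}$ whose components are all the iterated brackets (including the $M^i$'s themselves) built from $M^1,\ldots, M^{d-1}$, and $\bold{X}$ enjoys the $(P,\mathbb{G}')$-p.r.p., equivalently $\mathbb{P}(\bold{X}, \mathbb{G}')=\{P\}$. Setting $\bold{Y}:=M^d$, hypothesis C1) gives $\mathbb{P}(\bold{Y}, \mathbb{F}^d)=\{P\}$. Now I would apply Theorem \ref{th-main} with $\mathbb{F}:=\mathbb{G}'$, $\mathbb{H}:=\mathbb{F}^d$ and the semi-martingales coinciding with their martingale parts $\bold{M}:=\bold{X}$, $\bold{N}:=\bold{Y}$. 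Assumption A1) has just been verified; A2) is trivial since $P^{\bold{X}}=P$ and $P^{\bold{Y}}=P$ make the densities constant; A3) holds because for every component $X^i=[[[M^{j_1},M^{j_2}],M^{j_3}],\ldots,M^{j_k}]$ of $\bold{X}$ (with $j_1<\cdots<j_k<d$) the bracket $[X^i,M^d]$ is the iterated bracket $[[[[M^{j_1},M^{j_2}],M^{j_3}],\ldots,M^{j_k}],M^d]$, which by C2) is a $(P,\mathbb{G})$-martingale.

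Theorem \ref{th-main} then yields that $\mathbb{G}$ satisfies the standard hypotheses and that every $W\in\mathcal{M}^2(P,\mathbb{G})$ admits a unique decomposition as the sum of a vector integral with respect to $\bold{X}$, a vector integral with respect to $M^d$, and a vector integral with respect to $[\bold{X},M^d]^V$. The components of $[\bold{X},M^d]^V$ are exactly the iterated brackets of the target family that terminate in $M^d$, while the integral with respect to $\bold{X}$ decomposes as a direct sum of integrals along the components of $\bold{X}$ via Remark \ref{rem_estensione_mult} applied to the pairwise $(P,\mathbb{G}')$-strong orthogonality of those components provided by the inductive hypothesis. Assembling these pieces yields the announced basis and direct sum decomposition of $\mathcal{M}^2(P,\mathbb{G})$.

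The main obstacle I expect is verifying the pairwise $(P,\mathbb{G})$-strong orthogonality of all the iterated brackets in the enlarged family, needed to promote the above sum decomposition to a direct sum of orthogonal stable subspaces in $\mathbb{G}$ rather than just in $\mathbb{G}'$. The argument should parallel point j3) of Theorem \ref{thm-three filtrations}: one first upgrades the $P$-independence of $\mathcal{F}^1_T,\ldots, \mathcal{F}^{d-1}_T$ from the inductive hypothesis to that of $\mathcal{F}^1_T,\ldots, \mathcal{F}^{d}_T$ via point i1) of Theorem \ref{th-main} applied to $\bold{X}$ and $M^d$. This independence forces every iterated bracket to reduce to a sum of products of jumps as in (\ref{eq-covar}), so that the bracket of two such iterated brackets becomes a sum of products of jumps whose conditional expectation factorizes over the independent $\mathcal{F}^i_T$'s and vanishes whenever some index appears with odd total multiplicity across the pair; handling this combinatorial factorization for arbitrary $d$ is the delicate technical step.
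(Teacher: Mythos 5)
Your proposal is correct in outline, but it organizes the induction differently from the paper. The paper's inductive step is entirely hands-on: assuming $P$-independence of $\mathcal{F}^1_T,\ldots,\mathcal{F}^m_T$, it first derives the direct-sum representation of $\mathcal{M}^2(P,\bigvee_{i=1}^m\mathbb{F}^i)$ by repeating the argument of point j3) of Theorem \ref{thm-three filtrations}, then factorizes $P(A^1\cap\cdots\cap A^{m+1})$ explicitly: the cross terms involving only indices $\le m$ vanish by the inductive hypothesis, and the single term involving all $m+1$ integrals vanishes because, by the level-$m$ representation, the product of the first $m$ integrals lies in the stable space generated by the iterated brackets up to level $m$, which \textbf{C2)} makes orthogonal to $\mathcal{Z}^2(M^{m+1})$. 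You instead package the entire level-$(d-1)$ basis into a single multidimensional martingale $\bold{X}$ with $\mathbb{P}(\bold{X},\mathbb{G}')=\{P\}$ and feed it into Theorem \ref{th-main} with $\mathbb{F}:=\mathbb{G}'$, $\mathbb{H}:=\mathbb{F}^d$, obtaining the independence (i1) and the three-block decomposition (i3) as black-box outputs before refining. This is more modular and avoids redoing the indicator computation, at the price of having to check Theorem \ref{th-main}'s standing hypotheses for $\bold{X}$, namely that each iterated bracket is in $\mathcal{M}^2$ and is a $(P,\mathbb{G})$-martingale as required by \textbf{A3)} (the latter is exactly what \textbf{C2)} supplies for brackets of order $\ge 2$; the paper glosses over the same integrability issue). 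Both routes leave the same residual work, which you correctly identify: upgrading the pairwise strong orthogonality of the accumulated family from $\mathbb{G}'$ to $\mathbb{G}$ (immediate from the immersion given by independence) and verifying it for the new brackets $[X^i,M^d]$ via the reduction of $[B_1,B_2]$ to $\sum_{s\le\cdot}\prod\Delta M^i_s$ and the factorization of conditional expectations over the independent filtrations, where any index appearing in exactly one of the two brackets contributes a factor $E^P[\Delta M^i_s\mid\mathcal{F}^i_u]=0$; this is the same mechanism as point (b) of Theorem \ref{th-main} and point j3) of Theorem \ref{thm-three filtrations}.
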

\begin{proof}
For $d\leq 3$ the result follows by  previous theorem.~For $d> 3$
 the key point is the
$P$-independence of the $\sigma$-algebras $\mathcal{F}^1_T,
\mathcal{F}^2_T,\ldots, \mathcal{F}^d_T$ that is, for any choice
of $A^1\in \mathcal{F}^1_T$, $A^2\in \mathcal{F}^2_T$ and $A^d\in
\mathcal{F}^d_T$,
 the factorization of $P(A^1\cap A^2\cap\ldots A^d)$.~In order to prove it we proceed by induction.~The
 basis
 of the induction is the $P$-independence of $\mathcal{F}^1_T$ and  $\mathcal{F}^2_T$, which derives immediately from j1)
  of previous theorem since the assumption \textbf{C1)} implies assumption \textbf{B1)} and assumption \textbf{C2)}
  implies assumption \textbf{B2)}.~Fixed $m$ in  $(4,\ldots, d-1)$, the inductive hypothesis is the $P$-independence
  of $\mathcal{F}^1_T, \mathcal{F}^2_T,\ldots, \mathcal{F}^m_T$.\bigskip\\
Following an  analogous proof  to that of point j3) of Theorem
\ref{thm-three filtrations} it can be easily showed
\begin{align}\label{representatio until m}
\mathcal{M}^2\left(P,\bigvee_{i=1}^{m}\mathbb{F}^i\right)=&\oplus_{i\in (1,2\ldots m)}\mathcal{Z}^2(M^i)
\cr &\oplus_{i,j\in (1,2\ldots m), i<j}\mathcal{Z}^2([M^i, M^j])\cr &\oplus_{i,j,k\in (1,2\ldots m), i<j<k}\mathcal{Z}^2([[M^i, M^j],M^k])
\cr &\oplus_{i,j,k,l\in (1,2\ldots m), i<j<k<l}\mathcal{Z}^2([[[M^i, M^j],M^k],M^ l])\cr &\ldots\cr &\oplus\mathcal{Z}^2([[[M^1, M^2],M^3],\ldots,M^m]).
\end{align}
%
%
Now we prove the $P$-independence of $\mathcal{F}^1_T,
\mathcal{F}^2_T,\ldots, \mathcal{F}^{m+1}_T$.~Fixed $A^1\in
\mathcal{F}^1_T$, $A^2\in \mathcal{F}^2_T$ and $A^{m+1}\in
\mathcal{F}^{m+1}_T$, obviously $P(A^1\cap A^2\cap\ldots \cap
A^{m+1})$ differs from $P(A^1)\cdot P(A^2)\cdot\ldots \cdot
P(A^{m+1})$ by the $P$-expectation of an expression containing, up
to multiplicative constants, terms of the form
\begin{align}\label{terms to m}
\int_0^T\xi^{i_1}_s\,dM^{i_1}_s\,\cdot\,\int_0^T\xi^{i_2}_s\,dM^{i_2}_s\,\cdot\,\int_0^T\xi^{i_p}_s\,dM^{i_p}_s
\end{align}
with $p\leq m$ and $ i_1<i_2<\ldots<i_p\in (1,2,\ldots ,m),$, where $\xi^{i_r}\in \mathcal{L}^2(M^{i_r},P,\mathbb{F}^{i_r}),\;  1\leq r\leq p$, and  one term of the form
\begin{align}\label{last term}
\int_0^T\xi^{1}_s\,dM^{1}_s\,\cdot\,\int_0^T\xi^{2}_s\,dM^{2}_s\,\cdot\,\int_0^T\xi^{m}_s\,dM^{m}_s\,\cdot\int_0^T\xi^{m+1}_s\,dM^{m+1}_s
\end{align}
where $\xi^{r}\in \mathcal{L}^2(M^{r},P,\mathbb{F}^{r}),\;  1\leq r\leq m+1$.\bigskip\\
By the the inductive hypothesis the terms of the form (\ref{terms to m}) are the final values of centered $(P,\mathbb{G})$-martingales so that they have null $P$-expectation.\bigskip\\
Moreover by (\ref{representatio until m}) the product of the first
$m$ integrals in (\ref{last term}), wich belongs to
$\mathcal{M}^2\left(P,\bigvee_{i=1}^{m}\mathbb{F}^i\right)$,
is a sum of integrals with respect to martingales of the family\\
$M_i,\,i\in (1,2\ldots m)$,\\
$[M^i,M^j],\,i,j\in (1,\ldots, m),\,i<j$,\\
$[[M^i,M^j],M^k],\,i,j,k\in (1,\ldots, m),\,i<j<k$,\\
$[[[M^i,M^j],M^k],M^l],\,i,j,k,l\in (1,\ldots, m),\,i<j<k<l$,\\
$ \ldots$,\\
$[[[[M^{1},M^{2}],M^3],M^4]\ldots,M^{m}]$.\vspace{1em}\\
Then, since by \textbf{C2)} the stable space generated by the above family is orthogonal to the stable space generated by $M^{m+1}$, (\ref{last term})
is the final value of a centered $(P,\mathbb{G})$-martingale so that its $P$-expectation is equal to zero.
\end{proof}
\begin{remark}\label{rem-duffie-davis}
We get that $2^d-1$ is the biggest possible value of the
\textit{multiplicity} of $\mathbb{G}$ in the sense of Davis
Varaiya  (see \cite{davis1}), that is $2^d$
 is the maximum \textit{spanning number} of the economy given by the assets $M^1,\ldots, M^d$ (see \cite{duffie86}).
\end{remark}
\subsection{An example of credit risk modeling}\label{sec:default}

The second part of
Proposition 5.3 in \cite{ca-jean-za13}
 dealt with the martingale representation under the market measure $P$
 when the reference filtration $\mathbb{F}$ was progressively
 enlarged by the occurrence of a default time $\tau$.~More precisely a $(P,\mathbb{G})$-basis of real strongly orthogonal martingales when
 $\mathcal{G}_t=\cap_{s>t}\mathcal{F}_s\vee\sigma(\tau\wedge
s)$ was derived.~The main hypotheses were the existence of a real
$(P,\mathbb{F})$-martingale which was a basis (not necessarily
continuous) for the $(P,\mathbb{F})$-local martingales and the
equivalence, for all $t$, between the $\mathcal{F}_t$-conditional
law of $\tau$ and a fixed deterministic probability measure on
$\mathbb{R}^+$ without atoms, $\nu$.~The key tool was the
decoupling martingale preserving measure introduced by Grorud and
Pontier in \cite{gro_po99} and by Amendinger in
\cite{ame}.\bigskip\\
In a previous paper (see \cite{jean_lecam09}) Jeanblanc and Le Cam
assumed the existence of a (possibly multidimensional) continuous
semi-martingale $S$ with the $(P^*,\mathbb{F})$-p.r.p., and
moreover they assumed \textit{Jacod hypothesis} (see
\cite{jacod85}), that is just the absolute continuity with respect
to $\nu$ of the $\mathcal{F}_t$-conditional law of $\tau$ for all
$t$.~For the sake of completeness we recall that a time $\tau$
under Jacod hypothesis is called an \textit{initial time},
independently of the nature of $\nu$.~In \cite{jean_lecam09} $\nu$
was equal to Lebesgue measure and the authors identified a basis
for the $(P^*,\mathbb{G})$-martingales (see Theorem 1 in
\cite{jean_lecam09} and the analogous Proposition 4.2 in
\cite{ca-jean-za13}).\bigskip\\
It is to stress that both papers
worked under the so-called \textit{density hypothesis}, that is
$\tau$ initial time and $\nu$ with no atoms (see
\cite{elka-jean-jiao09}).~As it is well-known this assumption
forces $\tau$ to have no atoms and to \textit{avoids
$\mathbb{F}$-stopping times}
 (for any finite $\mathbb{F}$-stopping time $T$ it holds $\mathbb{P}(\tau=T)=0$, see Proposition 1 in \cite{jean_lecam09}).\bigskip\\Both papers gave a kind of generalization of the
martingale representation result obtained by Kusuoka when
$\mathbb{F}$ is equal to the natural filtration of a Brownian
motion $B$ (see \cite{kusuoka99}).~To the density hypothesis
actually Kusuoka added the immersion property, which was the
necessary condition in order to get $B$ as an element of a
$(P,\mathbb{G})$-basis.~The second element of the
$(P,\mathbb{G})$-basis was the \textit{$\mathbb{F}$-conditional
compensated default process}.~Without the immersion hypothesis the
Brownian motion should have be substituted by the martingale part
of its $(P,\mathbb{G})$-semi-martingale
decomposition, analogously to what happened  in  \cite{ca-jean-za13} and \cite{jean_lecam09}.\\

We propose a similar result, when the asset is a general
multidimensional semi-martingale enjoying the p.r.p. and $\tau$
satisfies the same hypotheses as in Proposition 5.3 in
\cite{ca-jean-za13}.~Moreover we assume immersion property of
$\mathbb{F}$ in $\mathbb{G}$ under $P$.
\begin{proposition}
Given a filtered probability space $(\Omega,\mathcal{F},\mathbb{F},P)$, let $\tau$ be a continuous random time such that
\begin{equation}\label{equivalence}
P(\tau\in\cdot\mid\mathcal{F}_t)\sim
P(\tau\in\cdot),\,\textrm{for\, every}\;t\in
[0,T],\;P\textrm{-a.s}.
\end{equation}
and $\bold{X}$ be an $m$-dimensional
$(P,\mathbb{F})$-semi-martingale like in (\ref{semi-martingale})
satisfying hypotheses \textbf{A1)} and \textbf{A2)}.~Consider the
progressively enlarged  filtration $\mathbb{G}$ defined by
$$\mathcal{G}_t:=\cap_{s>t}\mathcal{F}_s\vee\sigma(\tau\wedge
s).$$

 Let $\lambda$ be the real process
\begin{equation}\label{intensity}\lambda_t=\frac{p_t(t)}{P(\tau>t\mid\mathcal{F}_t)}, \ \ t\in [0,T],\end{equation}

with the process $\left(p_t(u)\right)_{u}$ defined by
\begin{equation}\label{conditional
density}\int_t^{T}p_t(u)\,du=P(\tau>t\mid\mathcal{F}_t).\end{equation}

Assume  $\mathbb{F}$ to be $P$-immersed in $\mathbb{G}$.~Then the
pair
$$\left(\bold{M},\mathbb{I}_{\{\tau\leq \cdot\}}-\int_0^{\tau\wedge
\cdot}\lambda_u\,du\right)$$ is a $(P,\mathbb{G})$-basis of
multidimensional martingales.
\end{proposition}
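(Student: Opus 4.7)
The plan is to apply Theorem \ref{th-main} under an auxiliary decoupling measure and then to transfer the conclusion to $P$ via the invariance of the predictable representation property under equivalent changes of measure (Lemma 2.5 in \cite{jean-song15}).

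First I introduce the auxiliary filtration $\mathbb{H}$, defined as the right-continuous completion of the natural filtration of the default indicator $\mathbb{I}_{\{\tau\leq \cdot\}}$, so that $\mathbb{G}=\mathbb{F}\vee \mathbb{H}$. Since by the density hypothesis (\ref{equivalence}) the unconditional distribution function $F$ of $\tau$ is continuous, the $(P,\mathbb{H})$-compensator of $\mathbb{I}_{\{\tau\leq \cdot\}}$ is $\int_0^{\tau\wedge \cdot}\frac{dF_u}{1-F_u}$, and
\[
N_t:=\mathbb{I}_{\{\tau\leq t\}}-\int_0^{\tau\wedge t}\frac{dF_u}{1-F_u}
\]
is a $(P,\mathbb{H})$-martingale which enjoys the $(P,\mathbb{H})$-p.r.p., so that $\mathbb{P}(N,\mathbb{H})=\{P|_{\mathcal{H}_T}\}$. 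Following Grorud-Pontier \cite{gro_po99} and Amendinger \cite{ame}, the density hypothesis allows the construction of a probability $P^*$ on $\mathcal{G}_T$, equivalent to $P$, which agrees with $P$ on both $\mathcal{F}_T$ and $\mathcal{H}_T$ and under which $\mathcal{F}_T$ and $\mathcal{H}_T$ are independent.

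The hypotheses of Theorem \ref{th-main} are then checked under $P^*$, with the second semi-martingale chosen to be $\bold{Y}=N$. Conditions \textbf{A1)} and \textbf{A2)} for $\bold{X}$ transfer from $P$ to $P^*$ because the two measures coincide on $\mathcal{F}_T$; the analogous conditions for $\bold{Y}=N$ are trivial since $N$ is already a $(P^*,\mathbb{H})$-martingale with the p.r.p., so that $L^{\bold{Y}}\equiv 1$. The $P^*$-independence of $\mathbb{F}$ and $\mathbb{H}$ makes each component of $\bold{M}$ $(P^*,\mathbb{G})$-strongly orthogonal to $N$, giving \textbf{A3)}. Theorem \ref{th-main} then shows that $(\bold{M}, N, [\bold{M},N]^V)$ is a $(P^*,\mathbb{G})$-basis. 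The covariation vector actually vanishes identically: $N$ is purely discontinuous with its unique jump at $\tau$, while the density hypothesis forces $\tau$ to avoid every $\mathbb{F}$-stopping time (see Proposition 1 in \cite{jean_lecam09}), so that $\Delta M^i_\tau=0$ $P^*$-a.s. and hence $[M^i,N]\equiv 0$ for every $i=1,\ldots,m$. Therefore $(\bold{M}, N)$ alone is a $(P^*,\mathbb{G})$-basis of multidimensional martingales.

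It remains to pass back to $P$. By the invariance of the p.r.p. under an equivalent change of measure, the $(P,\mathbb{G})$-martingale parts of $\bold{M}$ and $N$ form a $(P,\mathbb{G})$-basis. The assumed immersion of $\mathbb{F}$ into $\mathbb{G}$ under $P$ makes $\bold{M}$ a $(P,\mathbb{G})$-martingale, so its $(P,\mathbb{G})$-martingale part is $\bold{M}$ itself. On the other hand, by the definition of $\lambda$ in (\ref{intensity}), $\int_0^{\tau\wedge \cdot}\lambda_u\,du$ is precisely the $(P,\mathbb{G})$-dual predictable projection of $\mathbb{I}_{\{\tau\leq \cdot\}}$, so the $(P,\mathbb{G})$-martingale part of $N$ is exactly $H_t=\mathbb{I}_{\{\tau\leq t\}}-\int_0^{\tau\wedge t}\lambda_u\,du$. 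This yields the desired conclusion that $(\bold{M},H)$ is a $(P,\mathbb{G})$-basis of multidimensional martingales. The main obstacle in carrying out the programme rigorously is the construction of the decoupling measure $P^*$ in our semi-martingale setup and the verification that the local square-integrability conditions needed to invoke Theorem \ref{th-main} and the invariance lemma survive both the passage from $P$ to $P^*$ and the reverse transfer.
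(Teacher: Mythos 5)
Your proposal is correct and follows essentially the same route as the paper: construct the Amendinger--Grorud--Pontier decoupling measure $P^*$, apply Theorem \ref{th-main} under $P^*$ with $\bold{Y}=N$, observe that $[M^i,N]\equiv 0$ because $\tau$ avoids $\mathbb{F}$-stopping times, and transfer back to $P$ by the invariance of the p.r.p.\ under the equivalent change of measure, identifying the $(P,\mathbb{G})$-martingale parts as $\bold{M}$ (via immersion) and $H$ (via the intensity $\lambda$). The only point you leave implicit, which the paper spells out, is the final verification that $M^i$ and $H$ remain $(P,\mathbb{G})$-strongly orthogonal --- immediate from $\Delta M^i_\tau=0$ and the continuity of $\int_0^{\tau\wedge\cdot}\lambda_u\,du$ --- and the explicit Girsanov computation showing the drift corrections vanish as predictable finite-variation local martingales.
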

\begin{proof}  Let $F$ be the continuous distribution function of $\tau$ and let $\mathbb{H}$ be the natural filtration of the
process  $\left(\mathbb{I}_{\{\tau\leq t\}}\right)_{t}$.~Let $N$ be defined by
$$N_t=\mathbb{I}_{\{\tau\leq t\}}-\int_0^{\tau\wedge t}\frac{dF_u}{1-F_u}.$$$N$ is a real $(P,\mathbb{H})$-martingale
  enjoying
 the $(P|_{\mathcal{H}_T},\mathbb{H})$-p.r.p.~(see  Proposition 7.2.2.1 and Proposition 7.2.5.1 in \cite{Jean-yor-chesney}).~\bigskip\\Moreover $\bold{M}$
  enjoys the $(P|_{\mathcal{F}_T},\mathbb{F})$-p.r.p.~(see Proposition \ref{prop-pred-mart-rep}).\bigskip\\Now, the equivalence (\ref{equivalence}) implies the existence of
   a probability measure $P^*$ on $(\Omega, \mathcal{G}_T)$ under which $\mathbb{F}$ and $\sigma(\tau)$ are
 independent and such that $P^*|_{\mathcal{F}_T}=P|_{\mathcal{F}_T}$ and  $P^*|_{\mathcal{H}_T}=P|_{\mathcal{H}_T}$ (see Proposition 3.1 in \cite{ame}).~Then $P^*$
 decouples $\mathbb{F}$ and $\mathbb{H}$, since $\mathcal{H}_T=\sigma(\tau)$, so that, for any $i=1,\ldots, m$, $M^i$ and $N$ are $(P^*,\mathbb{G})$-strongly orthogonal
martingales.~Therefore Theorem \ref{th-main} applies.~Indeed
 $N$ jumps at an $\mathbb{H}$-totally inaccessible time since the law of $\tau$ has no atoms (see Remark 7.2.1.2 in \cite{Jean-yor-chesney} and IV 107 in \cite{del-me-a})\footnote{we observe that the jump time of
 $N$ is also a $\mathbb{G}$-totally inaccessible time thanks to Lemma 3.5 in \cite{jean-coku-nike12}, however this is not important here} and under $P^*$ the filtrations $\mathbb{F}$ and $\mathbb{H}$ are independent so that $[M^i,N]\equiv 0$.~We conclude that $(\bold{M},N)$ is a $(P^*,\mathbb{G})$-multidimensional basis.\bigskip\\
Let us introduce
$$\widetilde{L}^*=\frac{dP|_{\mathcal{G}_T}}{dP^*}.$$
Now we define $\tilde{M}^i, i=1,\ldots, m,$ and $\tilde{N}$  by
$$
\tilde{M^i_t}:=M^i_t-\int_0^t\frac{1}{\widetilde{L}^*_{s^-}}\;d\langle
\widetilde{L}^*, M^i \rangle^{P^*,\mathbb{G}}_{s}, \ \ t\in [0,T],
$$
$$
\tilde{N_t}:=N_t-\int_0^t\frac{1}{\widetilde{L}^*_{s^-}}\;d\langle
\widetilde{L}^*, N \rangle^{P^*,\mathbb{G}}_{s}, \ \ t\in [0,T].
$$
We observe that the processes $\langle \widetilde{L}^*, M^i\rangle^{P^*,\mathbb{G}}_\cdot$ and
 $\langle \widetilde{L}^*, N\rangle^{P^*,\mathbb{G}}_\cdot$ exist (see e.g.~VII 39 in \cite{del-me-b}).~
By Lemma 2.4 in \cite{jean-song15} the pair $(\bold{\tilde{M}},\tilde{N})$ enjoys the
$(P,\mathbb{G})$-p.r.p.\bigskip\\
In order to identify the pair $(\bold{\tilde{M}},\tilde{N})$, we recall that, since $\mathbb{F}$ is $P$-immersed in $\mathbb{G}$,
the process $$\mathbb{I}_{\{\tau\leq \cdot\}}-\int_0^{\tau\wedge \cdot}\lambda_u\,du$$ with $\lambda_\cdot$ defined by (\ref{intensity})
and (\ref{conditional density}), is a $(P,\mathbb{G})$-martingale (see pag 429 of \cite{Jean-yor-chesney} or Proposition 6.3 in \cite{jean-rut-99}).\bigskip\\
Now the pair $(\bold{\tilde{M}},\tilde{N})$ coincides with
$\left(\bold{M},\mathbb{I}_{\{\tau\leq \cdot\}}-\int_0^{\tau\wedge
\cdot}\lambda_u\,du\right)$.\bigskip\\In fact, for $i=1,\ldots, m$
$$
\tilde{M}^i_t-M^i_t=\int_0^t\frac{1}{\widetilde{L}^*_{s^-}}\;d\langle
\widetilde{L}^*, M^i \rangle^{P^*,\mathbb{G}}_{s},\;\;\;t\in [0,T]
$$
and
\begin{align*}
&\tilde{N}_t- \left(\mathbb{I}_{\{\tau\leq t\}}-\int_0^{\tau\wedge
t}\lambda_u\,du\right)=\\& \int_0^{\tau\wedge t}\frac{dF_u}{1-F_u}
-\int_0^{\tau\wedge
t}\lambda_u\,du+\int_0^t\frac{1}{\widetilde{L}^*_{s^-}}\;d\langle
\widetilde{L}^*, N \rangle^{P^*,\mathbb{G}}_{s}, \ \ \ t\in [0,T]
\end{align*}
are predictable $(P,\mathbb{G})$-local martingales, so they have to
be continuous (see Theorem 43 Chapter IV in \cite
{kall97}).~Moreover they have finite variation, so that they are necessarily
null.\bigskip\\
In remains to prove that, for all $i=1,\ldots, m$, $M^i$ and
$\mathbb{I}_{\{\tau\leq \cdot\}}-\int_0^{\tau\wedge
\cdot}\lambda_u\,du$ are $(P,\mathbb{G})$-strongly orthogonal
 martingales.~We consider their quadratic-covariation process which coincides $P$-a.s.~and $P^*$-a.s.~with
$$\left[M^i,\mathbb{I}_{\{\tau\leq \cdot\}}\right]_t-\left[M^i,\int_0^{\tau\wedge \cdot}\lambda_u\,du\right]_t, \;\;\;t\in [0,T].$$
The first addend is null since $\tau$ avoids $\mathbb{F}$-stopping
times.~The second addend is null since $\tau$ totally inaccessible
implies that $\int_0^{\tau\wedge \cdot}\lambda_u\,du$ is
continuous and of finite variation.
\end{proof}
\section{Perspectives}\label{sec:perspectives}
The object of ongoing research is to deeper
  investigate the problem of martingale representation under the historical measure in markets driven by
processes sharing  accessible jumps times  with positive
probability.~In particular the authors conjecture to extend the
Kusuoka like representation result to the case of a default time
$\tau$ which doesn't satisfies the density hypothesis.

\bibliography{biblio}

\end{document}